\documentclass[reqno]{amsart}
\usepackage[english]{babel}
\usepackage{graphicx,subfigure}
\usepackage{amsmath,amssymb}
\usepackage{appendix}
\usepackage{color}
\definecolor{oneblue}{rgb}{0,0.0,0.75}

\vfuzz2pt 
\hfuzz2pt 
\usepackage{fancyhdr}

\makeatletter
\newcommand{\sech}{\mathop{\operator@font sech}}
\newcommand{\sign}{\mathop{\operator@font sign}}
\makeatother

\newtheorem{lemma}{Lemma}[section]
\newtheorem{theorem}{Theorem}[section]
\newtheorem{proposition}{Proposition}[section]

\newtheorem{remark}{Remark}[section]
\numberwithin{equation}{section}

\begin{document}

\title[]{Spectral Jacobi approximations for Boussinesq systems}

\author[A. Duran]{Angel Duran}
\address{ Applied Mathematics Department,  University of
Valladolid, 47011 Valladolid, Spain}
\email{angeldm@uva.es}

\subjclass[2010]{65M70 (primary), 76B15 (secondary)}
\keywords{Boussinesq systems, spectral methods, error estimates}


\begin{abstract}
This paper is concerned with the numerical approximation of initial-boundary-value problems of a three-parameter family of Bona-Smith systems, derived as a model for the propagation of surface waves under a physical Boussinesq regime. The work proposed here  is focused on the corresponding problem with Dirichlet boundary conditions and its approximation in space with spectral methods based on Jacobi polynomials, which are defined from the orthogonality with respect to some weighted $L^{2}$ inner product.
Well-posedness of the problem on the corresponding weighted Sobolev spaces is first analyzed and existence and uniqueness of solution, locally in time, are proved. Then the spectral Galerkin semidiscrete scheme and some detailed comments on its implementation are introduced. The existence of numerical solution and error estimates on those weighted Sobolev spaces are established. Finally, the choice of the time integrator to complete the full discretization takes care of different stability issues that may be relevant when approximating the semidiscrete system. Some numerical experiments illustrate the results.

\end{abstract}

\maketitle

\section{Introduction}\label{sec1}
\subsection{Boussinesq systems for surface wave propagation}\label{sec11}
The four parameter family of Boussinesq systems of water waves
\begin{eqnarray}
\eta_{t}+u_{x}+(\eta u)_{x}+au_{xxx}-b\eta_{xxt}&=&0,\nonumber\\
u_{t}+\eta_{x}+uu_{x}+c\eta_{xxx}-du_{xxt}&=&0,\label{BS1}
\end{eqnarray}
was introduced in \cite{BonaChS2002,BonaChS2004} as a 1D approximation of the Euler equations for the two-way propagation of an irrotational, free surface flow of an incompressible inviscid fluid in a uniform horizontal channel, \cite{Lannes}, and the approximation is valid for long, small-amplitude waves, in the sense that
$$\epsilon=\frac{a}{h}<<1,\frac{\lambda}{h}>>1,\; S=\frac{a\lambda^{2}}{h^{2}}\sim 1,$$ where $a$ and $\lambda$ denote, respectively, the amplitude and wavelength of the surface wave and $h$ is the depth of the channel. In (\ref{BS1}), $x$ and $t$ are proportional to the position along the channel and time, respectively, while the real-valued functions $\eta=\eta(x,t), u=u(x,t)$ represent, respectively, the free surface elevation at $(x,t)$ with respect to an equlibrium and the horizontal velocity of the fluid at a height $y=-1+\theta(1+\eta(x,t))$, for some $\theta\in [0,1]$ (with the bottom of the channel at $\theta=0$ and the free surface at $\theta=1$. In dimensionless variables, $h=-1$ is fixed.) The parameters $a, b, c, d$ are given by
\begin{eqnarray}
a=\frac{1}{2}(\theta^{2}-\frac{1}{3})\nu,\; b=\frac{1}{2}(\theta^{2}-\frac{1}{3})(1-\nu),\nonumber\\
c=\frac{1}{2}(1-\theta^{2})\mu,\; d=\frac{1}{2}(1-\theta^{2})(1-\mu),\label{BS1p}
\end{eqnarray}
where $\nu$ and $\mu$ are real constants. See \cite{DougalisM2008} for a review on the systems (\ref{BS1}). 

The present paper is focused on the so-called Bona-Smith family, \cite{BonaS76}, of the form (\ref{BS1}) with $\nu=0$ and $b=d$. This leads to
\begin{eqnarray}
a=0, b=d=\frac{3\theta^{2}-1}{6}, c=\frac{2-3\theta^{2}}{3},\label{BS1p2}
\end{eqnarray} 
with $2/3\leq \theta^{2}\leq 1$, because the system is linearly ill-posed if $\theta^{2}<2/3$.  (Note also that $b=d>0, c\leq 0$.) Limiting cases are $\theta^{2}=2/3$ ($a=c=0, b=d=1/6$ or BBM-BBM system) and $\theta^{1}=1$ ($a=0, b=d=1/3, a=-1/3$). See \cite{BonaS76,BonaChS2004} for well-posedness results of the corresponding initial-value problem (ivp).
One topic of research on (\ref{BS1}) concerns the dynamics of its solitary wave solutions, \cite{DougalisDLM2007}. The study is usually made via the numerical approximation to the corresponding periodic ivp of (\ref{BS1}). However, some other phenomena which can be modelled with (\ref{BS1}) consider the system on a bounded interval $(a,b)$ and with other types of boundary conditions for $u$ and $\eta$ at $x=a, b$ for $t>0$. More specifically, here we will focus on Dirichlet boundary conditions for $\eta$ and $u$.

The present work is greatly indebted to mainly two papers by Antonopoulos et al., \cite{ADM1,ADM2} and one by Bj{\o}rkavag et al., \cite{Kalisch}. In \cite{ADM1}, the authors analyze the well-posedness of several ibvp for the Bona-Smith systems on bounded spatial intervals $[-L,L], L>0$. For the case of Dirichlet boundary conditions, the main result is the existence and uniqueness of solution, locally in time, on suitable Sobolev spaces and on spaces of  continuously differentiable functions of certain order. Before that, Bona and Chen, \cite{BonaChen}, proved a result of local well-posedness for the initial-boundary-value problem ( ibvp) with Dirichlet boundary conditions of the BBM-BBM system. On the other hand, \cite{ADM2} is concerned with the numerical approximation to the ibvp for the Bona-Smith systems analyzed in \cite{ADM1}.The numerical approximation is based on the standard Galerkin-finite element method with piecewise polynomial functions for the spatial discretization and a fourth-order explicit Runge-Kutta scheme as time integrator. Error estimates for the semidiscrete approximation and convergence of the full discretization are proved. The accuracy of the scheme is used to investigate computationally the interaction between solitary wave solutions and the solitary waves with the boundaries. Other error estimates, in the case of Dirichlet boundary conditions, are obtained in \cite{Winther} for the  case $\theta^{2}=1$ and the semidiscrete approximation with a nonstandard Galerkin finite element method; in \cite{BonaChen} for the BBM-BBM system and a fully discrete finite difference method of fourth order in space and time; and in \cite{Chatzi} for the BBM-BBM system when discretizing with a the Galerkin finite element method in space and a explicit multistep method in time.

The present paper proposes to approximate the Bona-Smith systems (\ref{BS1}), (\ref{BS1p2}) with Dirichlet boundary conditions by using spectral methods based on Jacobi polynomials for the spatial discretization and time discretization with special stability properties. The semidiscretization in space is motivated by 
\cite{Kalisch}, where a particular family of Jacobi polynomials, the Legendre polynomials, was already considered for approximating the ibvp of the BBM-BBM system, in order to study the onset of wave breaking connected with undular bores. The spectral approach based on Jacobi polynomials involves the use of suitable weighted inner products, \cite{BernardiM1997}. This requires some previous analysis of the well-posedness of the problem on the corresponding weighted Sobolev spaces. On the other hand, the Bona-Smith system contains regularizing terms of BBM type, $b\eta_{xxt}$ and $b u_{xxt}$. As a consequence, the spectral semidiscrete system is not stiff when $c=0$ and may have some stiff component when $c<0$, due to the presence of the term $c\eta_{xxx}$. This should have influence in the error estimate of the semidiscretization and in the choice of the time integrator. In addition, the last point may also be determined by the control of some stability issues of relevance, specially for long time simulations and when approximating problems from nonregular data. In the present paper we focused on the order of dispersion, \cite{SomH}, and the so-called strong stability preserving (SSP) property, \cite{Gotlieb2005,GotliebKS2009}, related to the linear and nonlinear stability properties that prevent spurious oscillatory behaviour in the temporal discretization of the semidiscrete system.
\subsection{Highlights and structure}\label{sec12}
The main goals of the project are:
\begin{enumerate}
\item The study of well-posedness of the corresponding ibvp with the proof of a result of existence, uniqueness, and regularity of solution, locally in time, on suitable weighted Sobolev spaces. The proof is based on similar arguments to those considered in \cite{ADM1} (in particular, the integral formulation of the problem) but adapted, in a nontrivial way, to weighted Sobolev spaces. This is treated in section \ref{sec2}. 
\item Section \ref{sec3} develops a convergence analysis of the semidiscrete Galerkin method based on spectral Jacobi polynomials, with the derivation of error estimates depending on the Jacobi weight and the regularity of the data. The lines of the proof involve a weak formulation of a related problem with homogeneous boundary conditions. In addition, several details on the implementation are provided. More specifically, the Galerkin-Numerical Integration (G-NI) formulation is described, and the relation with the spectral collocation approach is explained, \cite{CanutoHQZ1988}.
\item In Section \ref{sec4} the time integrators are introduced and their stability properties are described. The performance of the resulting full discretizations is checked, illustrating in particular 
the error estimates obtained in the analysis of the semidicretization in terms of the regularity of the solution. 
\end{enumerate}

\subsection{On spectral methods in weighted Sobolev spaces}\label{sec13}
We end the introduction summarizing some properties on weighted Sobolev spaces and Jacobi polynomials that will be used throughout the paper.  We refer to, e.~g., \cite{GottliebO,Mercier,MadayQ1981,BernardiM1989,CanutoHQZ1988,CanutoQ1982a,BernardiM1997,ShenTW2011} for details and additional properties.

Let $-1<\mu<1$ and define the Jacobi weight function 
\begin{eqnarray}
w(x)=w_{\mu}(x)=(1-x^{2})^{\mu},\; x\in\Omega=(-1,1).\label{jacobiw}
\end{eqnarray}
Then 
$L_{w}^{2}=L_{w}^{2}(\Omega)$ will stand for the space of squared integrable functions with respect to the weighted inner product
\begin{eqnarray}
(\phi,\psi)_{w}=\int_{-1}^{1}\phi(x)\psi(x)w(x)dx,\; \phi,\psi\in L_{w}^{2},\label{12a}
\end{eqnarray}
and associated norm $||\phi||_{0,w}=(\phi,\phi)_{w}^{1/2}$. For the Sobolev spaces $H_{w}^{k}=H_{w}^{k}(\Omega), k\geq 0$ integer (where $H_{w}^{0}=L_{w}^{2}$) the corresponding norm will be denoted by
\begin{eqnarray*}
||\phi ||_{k,w}^{2}=\sum_{j=0}^{k}||\frac{d^{j}}{dx^{j}}\phi||_{0,j}^{2}.
\end{eqnarray*}
We will also consider the spaces $H_{w,0}^{k}=H_{w,0}^{k}(\Omega)$ of functions $\phi\in H_{w}^{k}$ such that $\phi(-1)=\phi(1)=0$. For $s\geq 0$, $H_{w}^{s}=H_{w}^{s}(\Omega)$ (and $H_{w,0}^{s}=H_{w,0}^{s}(\Omega)$) are defined from interpolation theory, \cite{Adams,BernardiM1997}. Note that when $w(x)=1$ the spaces $H_{w}^{s}, H_{w,0}^{s}$ are the standard Sobolev spaces $H^{s}, H_{0}^{s}$.

For an integer $N\geq 2$, $\mathbb{P}_{N}$ will stand for the space of polynomials of degree at most $N$ on $\overline{\Omega}=[-1,1]$ and
\begin{eqnarray*}
\mathbb{P}_{N}^{0}=\{p\in\mathbb{P}_{N} / p(-1)=p(1)=0\}.
\end{eqnarray*}

Associated to (\ref{jacobiw}) we define the family of Jacobi polynomials $\{J_{n}^{\mu}\}_{n=0}^{\infty}$, which are orthogonal to each other in $L_{w}^{2}$. Particular cases such as Legendre and Chebyshev families correspond to $\mu=0$ and $\mu=-1/2$, respectively. Most properties of this Jacobi family (a particular one of the more general Jacobi polynomials $\{J_{n}^{\mu,\nu}\}_{n=0}^{\infty}$, orthogonal in $L_{w_{\mu,\nu}}^{2}$ with $w_{\alpha,\beta}(x)=(1-x)^{\alpha}(1+x)^{\beta}, -1<\alpha,\beta<1$) are extensions of the corresponding properties of the Legendre family, \cite{BernardiM1989,BernardiM1997}.

Let $N\geq 2$ be an integer and consider 
the bilinear form 
\begin{eqnarray}
A_{b}(\phi,\psi)=(\phi,\psi)_{w}+L_{b}(\phi,\psi),\label{ad26},\; \phi, \psi\in H_{w,0}^{1},
\end{eqnarray}
where, for $b>0$
\begin{eqnarray}
L_{b}(\phi,\psi)=\int_{-1}^{1}b\partial_{x}\phi\cdot\partial_{x}(\psi w)d{x}.\label{ad24}
\end{eqnarray}
Note that $L_{b}$ is equivalent to
\begin{eqnarray}
L(\phi,\psi)=\int_{\Omega}\partial_{x}\phi\cdot\partial_{x}(\psi w)d{x},\label{ad26a}
\end{eqnarray}
and therefore, \cite{BernardiM1989,BernardiM1997,CanutoHQZ1988}, 
the bilinear form $A_{b}$ in (\ref{ad26}) is continuous 
on $H_{w}^{1}\times H_{w,0}^{1}$ and elliptic on 
$H_{w,0}^{1}\times H_{w,0}^{1}$, that is, there are 
positive constants $C_{1}, C_{2}$ such that for all 
$\phi, \psi\in H_{w,0}^{1}$
\begin{eqnarray*}
|A_{b}(\phi,\psi)|&\leq &C_{1} \left(||\phi||_{0,w}||\psi||_{0,w}+||\phi_{x}||_{0,w}||\psi_{x}||_{0,w}\right)\\
&\leq &C_{1} ||\phi||_{1,w}||\psi||_{1,w},\; \phi\in H_{w}^{1}, \psi\in H_{w,0}^{1},\\
A_{b}(\phi,\phi)&\geq &C_{2} ||\phi||_{1,w}^{2},\;\phi\in H_{w,0}^{1}.
\end{eqnarray*}
If 
$v\in H_{w,0}^{1}$, then the orthogonal projection 
 of $v$ with 
respect to $A$ is defined as $R_{N}v\in 
\mathbb{P}_{N}^{0}$ such that
\begin{eqnarray}
A_{b}(R_{N}v-v,\psi)=0,\; \psi\in \mathbb{P}_{N}^{0}.\label{ad27}
\end{eqnarray}
For this projection, we have, \cite{BernardiM1989}
\begin{eqnarray}
||v-R_{N}v||_{1,w}+N||v-R_{N}v||_{0,w}\leq C N^{1-m}||v||_{m,w},\label{ad27b}
\end{eqnarray}
for $v\in H_{w}^{m}\cap H_{w,0}^{1}, m\geq 1,$ and some constant $C$. Furthermore, \cite{EAAD20}, the 
generalized estimate
\begin{eqnarray}
||v-R_{N}v||_{2,w} \leq C N^{3-m}||v||_{m,w},\label{ad27d}
\end{eqnarray}
holds, for some constant $C$ and $m\geq 3$.

For an integer $m\geq 0$, $C^{m}(\Omega)$ will stand for the space of $m$th-order continuously differentiable functions on $\Omega$. In addition, for $T>0, m\geq 0$, $C_{T}^{m}=C(0,T,H_{w}^{m})$ will denote the space of continuous functions $f:[0,T]\rightarrow H_{w}^{m}$ with norm
$$||f||_{C_{T}^{m}}=\max_{0\leq t\leq T}||f(t)||_{m,w},$$ while for product spaces of the form $E=C_{T}^{m}\times C_{T}^{p}, m,p\geq 0$, we will consider the norm
\begin{eqnarray}
||(f,g)||_{E}=||f||_{C_{T}^{m}}+||g||_{C_{T}^{p}}.\label{Enorm}
\end{eqnarray}
Throughout the paper $C$ will be a generic constant, whose dependence on some parameters will be specified in each case if necessary.

\section{Well-posedness}
\label{sec2}
\subsection{The initial-boundary-value problem}
We will consider the following ibvp for the Bona-Smith system on $\Omega$
\begin{eqnarray}
\eta_{t}+u_{x}+(\eta u)_{x}-b\eta_{xxt}&=&0,\nonumber\\
u_{t}+\eta_{x}+uu_{x}+c\eta_{xxx}-bu_{xxt}&=&0,\; x\in\overline{\Omega}, t>0,\label{BS2}\\
\eta(x,0)=\eta_{0}(x), u(x,0)=u_{0}(x),&&x\in\overline{\Omega},\nonumber\\
\eta(-1,t)=h_{1}(t),\; u(-1,t)=v_{1}(t),&&t>0,\label{BS2b}\\
\eta(1,t)=h_{2}(t),\; u(1,t)=v_{2}(t),&&t>0,\label{BS2c}
\end{eqnarray}
for continuous functions $h_{i}, v_{i}, i=1,2$.
\begin{remark}
\label{remark20}
The ibvp (\ref{BS2})-(\ref{BS2c}) can be posed on any interval $I=(a,b)$ using the linear change of variables between $I$ and $\Omega$
$$y=Ax+B,\quad x\in \Omega,\quad A=\frac{b-a}{2}, B=\frac{b+a}{2}.$$ For $-1<\mu<1$, the corresponding weight in $I$ would be $w_{I}(y)=w\left((y-B)/A\right)=\left(1-\left(\frac{y-B}{A}\right)^{2}\right)^{\mu}$.
\end{remark}
In \cite{ADM1} the following well-posedness result for the classical solutions of (\ref{BS2}) is proved.
\begin{theorem}
\label{theoADM}
Let $0<T<\infty$, $\eta_{0}\in C^{3}, u_{0}\in C^{2}, h_{i}, v_{i}\in C^{1}([0,T]), i=1,2$, and supposed that the compatibility conditions
$$\eta_{0}(-L)=h_{1}(0), \eta_{0}(L)=h_{2}(0), u_{0}(-L)=v_{1}(0), u_{0}(L)=v_{2}(0),$$ are satisfied. Let
$$\beta_{0}:=||\eta_{0}||_{C^{1}}+||u_{0}||_{C^{0}}+\max_{0\leq t\leq T}\sum_{i=1}^{2}(|h_{i}(t)|+|v_{i}(t)|).$$ Then, there exists a $T_{0}=T_{0}(T,\beta_{0})\in (0,T]$ such that (\ref{BS2}) has a unique classical solution $(\eta, u)$ in $[0,T_{0}]$ with $\eta\in C(0,T_{0};C^{3}), u\in C(0,T_{0};C^{2})$
\end{theorem}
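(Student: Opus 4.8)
The plan is to recast the ibvp (\ref{BS2})--(\ref{BS2c}) as a fixed-point problem for an integral equation in time, exploiting the regularizing (BBM-type) structure carried by the operator $I-b\partial_x^2$. Since $b>0$, for prescribed boundary values the two-point problem $(I-b\partial_x^2)\phi=f$ on $\Omega$, $\phi(-1)=\alpha_1,\ \phi(1)=\alpha_2$, has a unique solution; denote the solution operator by $\mathcal{B}[f;\alpha_1,\alpha_2]$. Solving the underlying ODE (or using the explicit Green's function built from $e^{\pm x/\sqrt b}$), one checks that $\mathcal{B}[\cdot;0,0]$ maps $C^k(\overline\Omega)$ into $C^{k+2}(\overline\Omega)$ continuously, and that the boundary lift satisfies $\|\mathcal{B}[0;\alpha_1,\alpha_2]\|_{C^k}\le C(|\alpha_1|+|\alpha_2|)$. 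This two-derivative gain is the mechanism that makes the whole scheme close.

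Differentiating the boundary conditions in time gives $\eta_t(\pm1,t)=h_i'(t)$ and $u_t(\pm1,t)=v_i'(t)$, so applying $\mathcal{B}$ to the two evolution equations in (\ref{BS2}) yields
\begin{align*}
\eta_t &= \mathcal{B}\!\left[-\partial_x(u+\eta u);\,h_1'(t),h_2'(t)\right],\\
u_t &= \mathcal{B}\!\left[-\eta_x-uu_x-c\,\eta_{xxx};\,v_1'(t),v_2'(t)\right].
\end{align*}
Integrating in time produces the fixed-point map $\Phi(\eta,u)=(\eta_0+\int_0^t(\cdots)\,ds,\ u_0+\int_0^t(\cdots)\,ds)$ on a space $E=C(0,T_0;C^3)\times C(0,T_0;C^2)$. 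A pleasant feature of this formulation is that the boundary values are recovered automatically from the compatibility hypotheses: for instance $\eta(\pm1,t)=\eta_0(\pm1)+\int_0^t h_i'(s)\,ds=\eta_0(\pm1)+h_i(t)-h_i(0)=h_i(t)$ precisely because $\eta_0(\pm1)=h_i(0)$, and likewise for $u$.

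Next I would verify that $\Phi$ maps a ball of $E$ into itself and is a contraction for $T_0$ small. The regularity counting closes exactly because of the two-derivative smoothing: in the first component $u+\eta u\in C^2$ (product of a $C^3$ and a $C^2$ factor), so $\partial_x(u+\eta u)\in C^1$ and $\mathcal{B}$ returns a $C^3$ function; in the second component the worst term is $c\,\eta_{xxx}\in C^0$, which $\mathcal{B}$ sends into $C^2$, while $\eta_x\in C^2$ and $uu_x\in C^1$ are better. Thus $\eta_t\in C^3,\ u_t\in C^2$, matching $E$, and the asymmetry $C^3$ versus $C^2$ is forced by the third-order term $c\eta_{xxx}$. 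The Lipschitz estimates are of the same type, with the top-order differences $\eta^{(1)}_{xxx}-\eta^{(2)}_{xxx}$ controlled in $C^0$ by $\|\eta^{(1)}-\eta^{(2)}\|_{C^3}$; the integration $\int_0^t(\cdots)ds$ supplies a factor $T_0$, so a small $T_0$ yields a contraction. Banach's theorem then gives a unique fixed point; since the integrands are continuous in $s$, the integral identities may be differentiated back to show it solves (\ref{BS2}) classically, and uniqueness follows from the contraction (or a Gronwall argument).

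The main obstacle is twofold and concentrated in the $u$-equation. First, $\mathcal{B}$ with inhomogeneous Dirichlet data must be handled carefully — most cleanly by subtracting $x$-linear lifts $G_\eta,G_u$ matching $h_i,v_i$ to reduce to homogeneous boundary conditions (which leaves $\partial_x^2$ and the BBM terms untouched and only adds smooth, explicitly bounded forcing), so that the estimates for $\mathcal{B}[\cdot;0,0]$ apply directly. Second, and more delicate, is arranging that the existence time $T_0$ depends only on the low-order quantity $\beta_0$ rather than on the full $C^3\times C^2$ norm of the data: a naive choice of contraction radius forces dependence on the high norms, so one needs the finer, quasilinear-type a priori estimate in which the third derivative enters the top-order energy linearly and, after exploiting the structure of the nonlinearity through integration by parts, produces a differential inequality $\frac{d}{dt}\|\cdot\|_{\mathrm{high}}\le C(\|\cdot\|_{\mathrm{low}})\,\|\cdot\|_{\mathrm{high}}$. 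Gronwall's inequality then propagates the $C^3\times C^2$ regularity on a time interval governed by $\beta_0$, which is exactly the dependence asserted in the statement.
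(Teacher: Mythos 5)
Your skeleton is the same one this paper (following \cite{ADM1}) assembles in Section \ref{sec2}: invert $I-b\partial_x^2$ using the explicit $\sinh$ Green's function, lift the boundary data (your $\int_0^t\mathcal{B}[0;h_1',h_2']\,ds$ is exactly the terms $\frac{w_2(x)}{\alpha}(h_1(t)-h_1(0))+\frac{w_1(x)}{\alpha}(h_2(t)-h_2(0))$ of (\ref{intf2b})), use the compatibility conditions to recover the boundary values, run a time-local contraction, and obtain uniqueness by Gronwall as in Proposition \ref{proposition24}. Up to the choice of spaces, your fixed-point map is precisely $(\mathcal{G}_1,\mathcal{G}_2)$ of (\ref{intf1})--(\ref{intf2}), and your high-space regularity counting, the contraction for small time, and the recovery of a classical solution are all sound.

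The genuine gap is the stated dependence $T_0=T_0(T,\beta_0)$, and your final paragraph does not repair it. A contraction in $C(0,T_0;C^3)\times C(0,T_0;C^2)$ yields a time controlled by $\|\eta_0\|_{C^3}+\|u_0\|_{C^2}$. The way the dependence is lowered (in \cite{ADM1}, and in the same spirit in the paper's weighted analog, Theorem \ref{theorem25}) is to run the fixed point in the \emph{lowest} space in which the smoothing of the solution operator closes the loop, so that the ball radius and hence $T_0$ involve only the low norms, and only afterwards to propagate higher regularity of the data on the \emph{same} interval (cf. Remark \ref{remark23}). Your substitute --- an inequality $\frac{d}{dt}\|\cdot\|_{\mathrm{high}}\le C(\|\cdot\|_{\mathrm{low}})\,\|\cdot\|_{\mathrm{high}}$ plus Gronwall --- needs as input that the low norms are themselves controlled on an interval depending only on $\beta_0$, and that is exactly where the argument breaks when $c<0$: in the $u$-equation the term $\mathcal{B}[-c\eta_{xxx};0,0]$ apparently costs three derivatives of $\eta$, so a $C^1\times C^0$ contraction (or a self-contained $C^1\times C^0$ a priori estimate) is contaminated by $\|\eta\|_{C^3}$. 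The missing ingredient is the structural identity, valid because the kernel solves the same ODE,
\[
\mathcal{B}[-c\eta_{xxx};0,0]
=\frac{c}{b}\Bigl(\eta_x+A_D\eta-\eta_x(-1)\,\tfrac{w_2(x)}{\alpha}-\eta_x(1)\,\tfrac{w_1(x)}{\alpha}\Bigr),
\]
which one checks by noting that the right-hand side vanishes at $x=\pm1$ and that applying $I-b\partial_x^2$ to it gives $\frac{c}{b}(\eta_x-b\eta_{xxx})-\frac{c}{b}\eta_x=-c\eta_{xxx}$, since $A_D\eta-b(A_D\eta)''=-\eta_x$ and the lifts are annihilated; equivalently, integrate by parts in the Green's-function integral (\ref{jacob23b}). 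This expresses the dangerous third-order term through $\eta_x$ and zeroth-order quantities only, so the full map becomes well defined and Lipschitz on $C(0,T_0;C^1)\times C(0,T_0;C^0)$ with constants depending only on a ball radius of size comparable to $\beta_0$; that is what delivers $T_0(T,\beta_0)$. Your mention of ``integration by parts'' is aimed at the nonlinearity, which is not the obstruction --- the products are handled by tame estimates --- so as written neither your low-order control nor the continuation step can be closed, and the theorem's claimed dependence on $\beta_0$ remains unproved.
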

cf. also Remark 2.2 of the same reference. For the BBM-BBM case, see also \cite{BonaChen}.

We extend Theorem \ref{theoADM} by proving the corresponding result on weighted Sobolev spaces with weights of the form (\ref{jacobiw}) and in $\Omega$. 
The proofs are based on the arguments used in \cite{ADM1} and two additional lemmas. The first one is given in \cite{BernardiM1989}.
\begin{lemma}
\label{lemma21} Let $-1<\mu<1$ and $w(x)$ be given by (\ref{jacobiw}). If $f\in H_{w}^{1}(\Omega)$, then $f\in H^{r}(\Omega)$ where $r=\inf\{1,1-\mu/2\}$. Consequently, $f\in C^{0}(\overline{\Omega})$.
\end{lemma}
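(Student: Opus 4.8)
The plan is to reduce everything to the behaviour of $f$ near the endpoints $\pm1$ and to recognise $r=\inf\{1,1-\mu/2\}$ as the critical regularity forced by the weight. If $\mu\leq 0$ the matter is immediate: on $\Omega$ one has $0<1-x^{2}\leq 1$, hence $w(x)=(1-x^{2})^{\mu}\geq 1$, so that $\int_{\Omega}f^{2}\,dx\leq ||f||_{0,w}^{2}$ and $\int_{\Omega}f_{x}^{2}\,dx\leq ||f_{x}||_{0,w}^{2}$; thus $f\in H^{1}(\Omega)=H^{r}(\Omega)$. From now on I assume $0<\mu<1$, where $r=1-\mu/2\in(1/2,1)$, and I introduce a smooth partition of unity subordinate to a cover of $\overline{\Omega}$ by the interior and two endpoint neighbourhoods. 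On any compact subset of $\Omega$ the weight $w$ is bounded above and below, so $H_{w}^{1}$ coincides with $H^{1}$ there and embeds into $H^{r}$; it therefore suffices to control each endpoint piece. By the symmetry $x\mapsto -x$ I treat only $x=1$ and set $\rho=1-x$, so that $w\simeq\rho^{\mu}$ as $\rho\to 0^{+}$.

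First I would dispose of the unweighted $L^{2}$ membership. Writing $f(x)=f(x_{0})+\int_{x_{0}}^{x}f_{x}(t)\,dt$ for a fixed interior point $x_{0}$ and applying the Cauchy--Schwarz inequality against the weight,
\begin{eqnarray*}
|f(x)-f(x_{0})|^{2}\leq ||f_{x}||_{0,w}^{2}\int_{x_{0}}^{x}w(t)^{-1}\,dt,
\end{eqnarray*}
and since $w(t)^{-1}=(1-t^{2})^{-\mu}$ is integrable up to $t=1$ precisely because $\mu<1$, the right-hand side stays bounded (indeed has a limit) as $x\to 1$. Hence $f$ is bounded on $\overline{\Omega}$, so in particular $f\in L^{2}(\Omega)$, and $f$ possesses finite boundary limits.

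The core of the argument is the fractional seminorm. I would use the Gagliardo characterisation: for $r\in(0,1)$, one has $f\in H^{r}(\Omega)$ if and only if $f\in L^{2}(\Omega)$ and
\begin{eqnarray*}
[f]_{r}^{2}=\int_{-1}^{1}\int_{-1}^{1}\frac{|f(x)-f(y)|^{2}}{|x-y|^{1+2r}}\,dx\,dy<\infty,\qquad 1+2r=3-\mu.
\end{eqnarray*}
Localising near $x=1$ and bounding $|f(x)-f(y)|\leq\int_{y}^{x}|f_{x}|$, I would write $|f_{x}|=g\,w^{-1/2}$ with $g=|f_{x}|\,w^{1/2}$, so that $g\in L^{2}$ with $||g||_{L^{2}}=||f_{x}||_{0,w}$, reducing the finiteness of $[f]_{r}^{2}$ to a weighted Hardy-type bound for the integral operator $g\mapsto\int_{y}^{x}g\,w^{-1/2}$, namely
\begin{eqnarray*}
\int\!\!\int\frac{\Big(\int_{y}^{x}g(t)\,w(t)^{-1/2}\,dt\Big)^{2}}{|x-y|^{3-\mu}}\,dx\,dy\leq C\,||g||_{L^{2}}^{2}.
\end{eqnarray*}
With $w(t)^{-1/2}\simeq\rho^{-\mu/2}$ near the endpoint, the kernel $|x-y|^{-(3-\mu)}$ and the weight scale in exactly the same way, so a dyadic decomposition of the endpoint neighbourhood into intervals $\rho\simeq 2^{-k}$ reduces the double integral to a summable series controlled by $\sum_{k}\int_{I_{k}}g^{2}\leq ||g||_{L^{2}}^{2}$. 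This matching of homogeneities is precisely why $r=1-\mu/2$ is the correct exponent, and proving this weighted Hardy inequality cleanly is the main obstacle; a model computation with $f\simeq\rho^{\alpha}$ shows that $f\in H_{w}^{1}$ and $f\in H^{r}$ share the same admissibility threshold $\alpha>(1-\mu)/2$, confirming that the exponent is sharp.

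Assembling the interior and endpoint contributions through the partition of unity then yields $f\in H^{r}(\Omega)$, the nonlocal cross terms in $[f]_{r}^{2}$ (with $x$ near an endpoint and $y$ in the interior) being harmless since there $|x-y|$ is bounded below and $f\in L^{2}$. Finally, the stated consequence follows at once: since $r=\inf\{1,1-\mu/2\}>1/2$ whenever $\mu<1$, the one-dimensional Sobolev embedding $H^{r}(\Omega)\hookrightarrow C^{0}(\overline{\Omega})$ gives $f\in C^{0}(\overline{\Omega})$.
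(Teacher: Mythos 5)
The paper itself does not prove this lemma: it is quoted directly from Bernardi--Maday \cite{BernardiM1989}, so your proposal is by necessity a different route --- a self-contained argument where the paper has only a citation. Your outline is sound: the case $\mu\le 0$ is indeed trivial since then $w\ge 1$ and $r=1$; the reduction to the endpoints by a partition of unity is standard; the bound $|f(x)-f(x_{0})|^{2}\le \|f_{x}\|_{0,w}^{2}\int_{x_{0}}^{x}w^{-1}$, using integrability of $w^{-1}$ for $\mu<1$, correctly gives boundedness (and in fact already yields $f\in C^{0}(\overline{\Omega})$ directly, without passing through $H^{r}$); and identifying the Gagliardo seminorm with exponent $1+2r=3-\mu$ as the quantity to bound is the right move. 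The one step you leave as a sketch is the one you yourself flag as the main obstacle, the weighted Hardy-type bound; it does hold and can be completed as follows. Split the pairs $y<x$ near the endpoint into the near-diagonal regime $x-y\le y/2$, where $w(t)^{-1/2}\simeq y^{-\mu/2}$ on $[y,x]$ and Cauchy--Schwarz plus Fubini give a bound by $C\|g\|_{L^{2}}^{2}$ (this uses $\mu>0$), and the far regime $x-y>y/2$, where $|x-y|\simeq x$ and the double integral collapses to the one-dimensional Hardy inequality
\begin{equation*}
\int_{0}^{1}x^{\mu-2}\Bigl(\int_{0}^{x}g(t)\,t^{-\mu/2}\,dt\Bigr)^{2}dx\le C\|g\|_{L^{2}}^{2},
\end{equation*}
which is valid because the Muckenhoupt quantity $\bigl(\int_{r}^{1}x^{\mu-2}dx\bigr)\bigl(\int_{0}^{r}t^{-\mu}dt\bigr)\simeq r^{\mu-1}\cdot r^{1-\mu}=1$ is uniformly bounded in $r$. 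Note that this far-field part couples all dyadic scales below $x$, so the series you describe is not literally controlled block by block by $\sum_{k}\int_{I_{k}}g^{2}$; one needs this Hardy (or a Schur-test) step, with its geometric off-diagonal decay, to sum the cross-scale interactions. With that inequality supplied, your localization and cross-term remarks finish the argument, and your model computation ($\alpha>(1-\mu)/2$ on both sides) correctly explains why $r=1-\mu/2$ is sharp. As for what each approach buys: the paper's citation is short and rests on Bernardi--Maday's systematic theory of these weighted spaces, which the paper also invokes for the higher-order and projection estimates; your argument is elementary and self-contained, and it makes the origin of the critical exponent transparent, at the cost of having to establish the weighted Hardy inequality honestly.
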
 
\begin{remark}
Observe that, since $-1<\mu<1$ then $r>1/2$ and therefore $H^{r}$ is an algebra.
\end{remark}
\begin{lemma}
\label{lemma22} If $\eta, u\in H_{w}^{1}$ then $\eta u\in H_{w}^{1}$ and
\begin{eqnarray*}
||\eta u||_{1,w}\leq C||\eta ||_{1,w}||u||_{1,w},\label{algebraw}
\end{eqnarray*}
for some constant $C>0$
\end{lemma}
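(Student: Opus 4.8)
The plan is to reduce the weighted estimate to elementary pointwise bounds, exploiting that $H_w^1$ embeds into a space of bounded functions by Lemma \ref{lemma21}. First I would record the chain of continuous embeddings
\[
H_w^1(\Omega)\hookrightarrow H^r(\Omega)\hookrightarrow C^0(\overline{\Omega})\hookrightarrow L^\infty(\Omega),\quad r=\inf\{1,1-\mu/2\}>1/2.
\]
The first inclusion is Lemma \ref{lemma21}; its continuity (boundedness as a linear map between Banach spaces) follows from the corresponding norm estimate in \cite{BernardiM1989}, or alternatively from the closed graph theorem, since the two modes of convergence force the same $L^2_{\mathrm{loc}}(\Omega)$ limit. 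The second is the one-dimensional Sobolev embedding, available precisely because $r>1/2$. Hence there is a constant $C>0$ with
\begin{equation}
||f||_{L^\infty(\Omega)}\leq C\,||f||_{1,w},\quad f\in H_w^1(\Omega).\label{linfw}
\end{equation}

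Next I would verify that $\eta u\in H_w^1$ and identify its weak derivative. Since $w$ is bounded above and below on every compact subinterval of $\Omega$, the functions $\eta,u\in H_w^1$ lie in $H^1_{\mathrm{loc}}(\Omega)$, so the Leibniz rule $(\eta u)_x=\eta_x u+\eta u_x$ holds in the sense of distributions on $\Omega$. By (\ref{linfw}) both $\eta$ and $u$ are essentially bounded, whence $\eta u\in L^2_w$ and $\eta_x u+\eta u_x\in L^2_w$; this already shows $\eta u\in H_w^1$ with the displayed weak derivative.

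Finally, the bound is routine once boundedness is in hand. Writing $||\cdot||_{1,w}^2=||\cdot||_{0,w}^2+||\partial_x(\cdot)||_{0,w}^2$ and pulling the sup-norm factors out of the weighted integrals gives
\begin{eqnarray*}
||\eta u||_{0,w}&\leq&||\eta||_{L^\infty}\,||u||_{0,w},\\
||(\eta u)_x||_{0,w}&\leq&||u||_{L^\infty}\,||\eta_x||_{0,w}+||\eta||_{L^\infty}\,||u_x||_{0,w}.
\end{eqnarray*}
Inserting (\ref{linfw}), bounding each $||\cdot||_{0,w}$ factor by the corresponding $||\cdot||_{1,w}$, and combining the two lines yields $||\eta u||_{1,w}\leq C\,||\eta||_{1,w}\,||u||_{1,w}$. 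The only points requiring care, rather than genuine difficulty, are the passage from the set inclusion of Lemma \ref{lemma21} to a bounded embedding with a uniform constant, and the justification of the Leibniz rule in the degenerate weighted setting near $\partial\Omega$. The remark that $H^r$ is an algebra offers an alternative route to $\eta u\in H^r$, but it does not by itself place the product back in $H_w^1$, which is why the $L^\infty$ argument is preferable here.
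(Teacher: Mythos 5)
Your proof is correct and follows essentially the same route as the paper's: both rest on Lemma \ref{lemma21} to pass from $H_w^1$ through $H^r$ ($r>1/2$) to an $L^\infty$ bound, apply the Leibniz rule, and pull the sup-norm factors out of the weighted integrals. The paper simply carries this out in one chain of inequalities, while you spell out the continuity of the embedding and the justification of the product rule near $\partial\Omega$, which the paper leaves implicit.
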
 
\begin{proof}
Using Sobolev inequality and Lemma \ref{lemma21}, it holds that
\begin{eqnarray*}
||\eta u||_{1,w}^{2}&=&\int_{-1}^{1}(\eta u)^{2}wdx+\int_{-1}^{1}\left(\eta'u+\eta u'\right)^{2}wdx\\
&\leq &\int_{-1}^{1}(\eta u)^{2}wdx+2\int_{-1}^{1}\left((\eta')^{2}u^{2}+\eta^{2}( u')^{2}\right)wdx\\
&\leq &2\int_{-1}^{1}\left(\eta^{2}+(\eta')^{2}\right)u^{2}wdx+2\int_{-1}^{1}\eta^{2}(u')^{2}wdx\\
&\leq &2|u|_{\infty}^{2}\int_{-1}^{1}\left(\eta^{2}+(\eta')^{2}\right)wdx+2|\eta|_{\infty}^{2}\int_{-1}^{1}\left(u^{2}+(u')^{2}\right)wdx\\ 
&\leq & C\left(||u||_{r}^{2}||\eta||_{1,w}^{2}+||\eta||_{r}^{2}||u||_{1,w}^{2}\right)\leq C ||\eta||_{1,w}^{2}||u||_{1,w}^{2},
\end{eqnarray*}
where $r=\inf\{1,1-\mu/2\}$. 
\end{proof}

We now consider the two-point boundary-value problem, for $b>0$
\begin{eqnarray}
&&v-bv''=-f',\quad x\in \overline{\Omega},\nonumber\\
&&v(-1)=v(1)=0,
\label{jacob23}
\end{eqnarray}
whose solution can be written in the form, cf. \cite{ADM1}
\begin{eqnarray}
v(x)=(A_{D}f)(x)=\int_{-1}^{1}G_{\xi}(x,\xi)f(\xi)d\xi,\label{jacob23b}
\end{eqnarray}
where $G:\overline{\Omega}\times \overline{\Omega}\rightarrow \mathbb{R}$ is the Green function
\begin{eqnarray*}
G(x,\xi)=-\frac{1}{bW}\left\{
\begin{matrix}w_{1}(\xi)w_{2}(x)&-1\leq\xi\leq x\\w_{1}(x)w_{2}(\xi)&x<\xi\leq 1\end{matrix}
\right.
\end{eqnarray*}
where $w_{1}(x)={\rm sinh}\frac{1+x}{\sqrt{b}}, w_{2}(x)={\rm sinh}\frac{1-x}{\sqrt{b}}$, and $W=w_{1}w_{2}'-w_{1}'w_{2}$. Let $w$ be given by (\ref{jacobiw}) for some $-1<\mu<1$ and consider the weak formulation of (\ref{jacob23})
\begin{eqnarray}
A_{b}(v,\varphi)=-\left(f',\varphi\right)_{0,w},\quad \varphi\in H_{w,0}^{1},\label{jacob24}
\end{eqnarray}
where $A_{b}$ is defined in (\ref{ad26}), (\ref{ad24}). (Note that $w_{1}, w_{2}\in H_{w}^{2}$.) Due to the elliptic properties of $A_{b}$, mentioned above, Lax-Milgram theorem ensures the existence of a solution $v\in H_{w,0}^{1}\cap H_{w}^{2}$ of (\ref{jacob24}) such that
\begin{eqnarray*}
||v||_{2,w}\leq C||f||_{0,w}\leq C||f||_{1,w},\quad f\in H_{w}^{1}.
\end{eqnarray*}
Furthermore, taking $\varphi=v$ in (\ref{jacob24}) and using again the ellipticity of $A_{b}$, it hold that
\begin{eqnarray*}
||v||_{1,w}^{2}\leq C|(f',v)_{0,w}|\leq C||f||_{0,w}||v||_{1,w},
\end{eqnarray*}
and therefore $||v||_{1,w}\leq C||f||_{0,w}$, for some constant $C$. This proves the following result (cf. Lemma 2.1 of \cite{ADM1}).
\begin{lemma}
\label{lemma23} Let $A_{D}$ be defined by (\ref{jacob23b}). Then:
\begin{itemize}
\item[(i)] If $f\in L_{w}^{2}$, then $A_{D}f\in H_{w,0}^{1}$ and $||A_{D}f||_{1,w}\leq C ||f||_{0,w}$ for some constant $C=C(b)$.
\item[(ii)] If $f\in H_{w}^{1}$, then $A_{D}f\in H_{w,0}^{1}\cap H_{w}^{2}$ and $||A_{D}f||_{2,w}\leq C ||f||_{1,w}$ for some constant $C=C(b)$.
\end{itemize}
\end{lemma}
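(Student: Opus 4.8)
My plan is to characterize $A_Df$ variationally rather than through the explicit kernel in \eqref{jacob23b}: by the integration by parts that produced \eqref{jacob24}, $A_Df$ is the solution $v\in H_{w,0}^1$ of $A_b(v,\varphi)=-(f',\varphi)_{0,w}$ for all $\varphi\in H_{w,0}^1$, and the continuity and ellipticity of $A_b$ on $H_{w,0}^1$ are already in hand. The only point genuinely specific to the weighted setting is giving meaning to, and bounding, the right-hand side functional $\varphi\mapsto-(f',\varphi)_{0,w}$ in the two regularity regimes; once this is done, both statements follow from Lax--Milgram together with a one-step elliptic bootstrap.

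For part (i), with $f$ only in $L_w^2$, I would interpret the functional by moving the derivative onto the test function. Since $\varphi\in H_{w,0}^1$ vanishes at $x=\pm1$ (so that $\varphi w$ vanishes there as well), integration by parts gives $(f',\varphi)_{0,w}=-\int_{-1}^1 f\,(\varphi w)'\,dx=-\bigl(f,(\varphi w)'/w\bigr)_{w}$ with no boundary contribution, and Cauchy--Schwarz in $L_w^2$ reduces the boundedness of the functional to the weighted estimate $\|(\varphi w)'/w\|_{0,w}\le C\|\varphi\|_{1,w}$. This is exactly the inequality underlying the continuity of the form $L$ in \eqref{ad26a} on $H_w^1\times H_{w,0}^1$, so nothing new has to be proved, and the functional is bounded on $H_{w,0}^1$ with norm at most $C\|f\|_{0,w}$. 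Lax--Milgram then yields a unique $v\in H_{w,0}^1$, and choosing $\varphi=v$ together with the ellipticity $A_b(v,v)\ge C_2\|v\|_{1,w}^2$ gives $C_2\|v\|_{1,w}^2\le|(f',v)_{0,w}|\le C\|f\|_{0,w}\|v\|_{1,w}$, i.e. $\|v\|_{1,w}\le C\|f\|_{0,w}$. That this $v$ is the operator $A_Df$ of \eqref{jacob23b} I would check by uniqueness: for smooth $f$ the kernel expression solves \eqref{jacob23} classically, hence solves \eqref{jacob24}, and density of smooth functions in $L_w^2$ identifies the two bounded maps.

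For part (ii) the functional is controlled directly, since $f'\in L_w^2$ gives $|(f',\varphi)_{0,w}|\le\|f\|_{1,w}\|\varphi\|_{1,w}$, so part (i) already places $v=A_Df$ in $H_{w,0}^1$ with $\|v\|_{1,w}\le C\|f\|_{0,w}$. It remains to bootstrap to $H_w^2$. Restricting \eqref{jacob24} to $\varphi\in C_c^\infty(\Omega)$ and using that $\psi\mapsto\psi/w$ maps $C_c^\infty(\Omega)$ onto itself (as $w>0$ in $\Omega$), the identity $b\int v'(\varphi w)'\,dx=-\int(v+f')\,\varphi w\,dx$ becomes the distributional equation $bv''=v+f'$ on $\Omega$. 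Since $v\in L_w^2$ and $f'\in L_w^2$, the right-hand side lies in $L_w^2$, so $v''\in L_w^2$ with $\|v''\|_{0,w}\le b^{-1}\bigl(\|v\|_{0,w}+\|f'\|_{0,w}\bigr)\le C\|f\|_{1,w}$. Hence $\|A_Df\|_{2,w}^2=\|v\|_{1,w}^2+\|v''\|_{0,w}^2\le C\|f\|_{1,w}^2$, which is the asserted bound.

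The step I expect to be the main obstacle is the weighted right-hand side estimate in part (i). In contrast with the unweighted case, $(\varphi w)'$ carries the factor $w'=-2\mu x(1-x^2)^{\mu-1}$, whose quotient $w'/w=-2\mu x/(1-x^2)$ is singular at $x=\pm1$, so controlling $\|(\varphi w)'/w\|_{0,w}$ amounts to a Hardy-type inequality absorbing this endpoint singularity against $\varphi$. The clean way to proceed is not to establish this inequality by hand but to read it off from the continuity of $A_b$ (equivalently $L$) on $H_w^1\times H_{w,0}^1$ already recorded above; with that in place the remainder is the standard Lax--Milgram-plus-bootstrap argument transported from the unweighted setting.
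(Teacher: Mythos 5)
Your proposal is correct and takes essentially the same route as the paper: the paper likewise discards the kernel representation, characterizes $v=A_{D}f$ through the weak formulation (\ref{jacob24}), and derives both bounds from Lax--Milgram, the ellipticity of $A_{b}$, and the same weighted integration by parts (its inequality $|(f',v)_{0,w}|\leq C||f||_{0,w}||v||_{1,w}$ is exactly your Hardy-type step). The points you elaborate --- interpreting the right-hand side functional when $f\in L_{w}^{2}$ only, identifying the variational solution with the Green-function operator by density, and the distributional bootstrap $bv''=v+f'$ yielding the $H_{w}^{2}$ bound --- are left implicit in the paper, and you supply them correctly.
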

\begin{remark}
\label{remark21}
The extension of Lemma \ref{lemma23} to weighted, higher-order Sobolev spaces can be obtained from \cite{BernardiM1989}: if $f'\in H_{w}^{m}, m\geq 1$ integer ($f\in H_{w}^{m+1}$) in (\ref{jacob24}), then $v''\in H_{w}^{m}$, being $v=A_{D}f$. Hence, by induction, we have $v\in H_{w}^{m+2}$ and
$$||v||_{m+2,w}\leq C||f||_{m,w}\leq C||f||_{m+1,w},$$ for some constant $C$.
\end{remark}

In the sequel we will make use of the integral formulation of (\ref{BS2})-(\ref{BS2c}) as (cf. \cite{ADM1})
\begin{eqnarray}
\eta(x,t)&=&\mathcal{G}_{1}(x,t,\eta,u)=H(x,t)+\int_{0}^{t}A_{D}(u+\eta u)d\tau,\label{intf1}\\
u(x,t)&=&\mathcal{G}_{2}(x,t,\eta,u)=U(x,t)+\int_{0}^{t}A_{D}\left(c\eta_{xx}+\eta +\frac{u^{2}}{2}\right)d\tau.\label{intf2}
\end{eqnarray}
where
\begin{eqnarray}
H(x,t)&=&\eta_{0}(x)+\frac{w_{2}(x)}{\alpha}\left(h_{1}(t)-h_{1}(0)\right)+\frac{w_{1}(x)}{\alpha}\left(h_{2}(t)-h_{2}(0)\right),\nonumber\\
U(x,t)&=&u_{0}(x)+\frac{w_{2}(x)}{\alpha}\left(v_{1}(t)-v_{1}(0)\right)+\frac{w_{1}(x)}{\alpha}\left(v_{2}(t)-v_{2}(0)\right),\label{intf2b}
\end{eqnarray}
and $\alpha={\rm sinh}(2/\sqrt{b})$.
The problem of uniqueness of solution of (\ref{intf1})-(\ref{intf2b}) is analyzed in the following result.
\begin{proposition}
\label{proposition24} Let $T>0$, $w$ be a weight defined by (\ref{jacobiw}) for some $-1<\mu<1$, $h_{i}, v_{i}\in C[0,T], i=1,2$. The following holds:
\begin{itemize}
\item If $c<0$ and  $\eta_{0}\in H_{w}^{2}, u_{0}\in H_{w}^{1}$, then the system (\ref{intf1}), (\ref{intf2}) has at most one solution $(\eta,u)\in C_{T}^{2}\times C_{T}^{1}$.
\item If $c=0$ and  $\eta_{0}\in H_{w}^{2}, u_{0}\in H_{w}^{2}$, then the system (\ref{intf1}), (\ref{intf2}) has at most one solution $(\eta,u)\in C_{T}^{2}\times C_{T}^{2}$.
\end{itemize}
\end{proposition}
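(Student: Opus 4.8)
The plan is to argue by a Gronwall estimate directly on the integral system. Suppose $(\eta_1,u_1)$ and $(\eta_2,u_2)$ are two solutions in the stated class sharing the same initial and boundary data, and set $\bar\eta=\eta_1-\eta_2$, $\bar u=u_1-u_2$. Since the functions $H$ and $U$ in (\ref{intf2b}) depend only on the data, they cancel in the difference, and subtracting (\ref{intf1})--(\ref{intf2}) for the two solutions gives
\begin{eqnarray*}
\bar\eta(t)&=&\int_0^t A_D\left(\bar u+\eta_1\bar u+\bar\eta\, u_2\right)d\tau,\\
\bar u(t)&=&\int_0^t A_D\left(c\,\bar\eta_{xx}+\bar\eta+\tfrac{1}{2}(u_1+u_2)\bar u\right)d\tau,
\end{eqnarray*}
where I have used $\eta_1u_1-\eta_2u_2=\eta_1\bar u+\bar\eta\,u_2$ and $u_1^2-u_2^2=(u_1+u_2)\bar u$. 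All coefficient factors $\eta_1,u_2,u_1+u_2$ are controlled uniformly on $[0,T]$ by the norms of the two fixed solutions, which I denote collectively by a constant $M$.

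Next I would estimate the two equations with the mapping properties of $A_D$ from Lemma \ref{lemma23}, matched to the regularity I want to propagate. For the $\eta$-equation I apply part (ii), $\|A_Df\|_{2,w}\le C\|f\|_{1,w}$, together with the algebra estimate of Lemma \ref{lemma22}; since $\eta_1,u_2\in H_w^1$ with norms $\le M$, this yields
$$\|\bar\eta(t)\|_{2,w}\le C\int_0^t\left(\|\bar u\|_{1,w}+\|\bar\eta\|_{1,w}\right)d\tau\le C\int_0^t\left(\|\bar u\|_{1,w}+\|\bar\eta\|_{2,w}\right)d\tau.$$
For the $u$-equation in the case $c<0$ I would instead use part (i), $\|A_Df\|_{1,w}\le C\|f\|_{0,w}$, because this requires only $\|c\,\bar\eta_{xx}\|_{0,w}\le|c|\,\|\bar\eta\|_{2,w}$, while the product term is bounded through the embedding $H_w^1\hookrightarrow C^0(\overline\Omega)$ of Lemma \ref{lemma21} by $\tfrac12|u_1+u_2|_\infty\|\bar u\|_{0,w}\le CM\|\bar u\|_{1,w}$. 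This gives
$$\|\bar u(t)\|_{1,w}\le C\int_0^t\left(\|\bar\eta\|_{2,w}+\|\bar u\|_{1,w}\right)d\tau.$$
Setting $\Phi(t)=\|\bar\eta(t)\|_{2,w}+\|\bar u(t)\|_{1,w}$ and adding the two inequalities produces $\Phi(t)\le C\int_0^t\Phi(\tau)\,d\tau$, whence Gronwall's lemma forces $\Phi\equiv0$, i.e. $(\eta_1,u_1)=(\eta_2,u_2)$.

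For the case $c=0$ the term $c\,\bar\eta_{xx}$ disappears, so nothing obstructs applying part (ii) of Lemma \ref{lemma23} to the $u$-equation as well; using the algebra lemma on $\tfrac12(u_1+u_2)\bar u$ (legitimate since $u_1,u_2\in H_w^2\subset H_w^1$) one obtains $\|\bar u(t)\|_{2,w}\le C\int_0^t(\|\bar\eta\|_{2,w}+\|\bar u\|_{2,w})\,d\tau$, and the argument closes with $\Phi(t)=\|\bar\eta\|_{2,w}+\|\bar u\|_{2,w}$ exactly as before. I expect the one genuinely delicate point to be the asymmetry forced by the dispersive term $c\,\bar\eta_{xx}$ when $c<0$: one cannot propagate $H_w^2$ control of $\bar u$ (that would demand $\bar\eta\in H_w^3$), so the estimates must be arranged so that the $\eta$-equation recovers two derivatives from only $H_w^1$ data on $u$, while the $u$-equation tolerates the loss by measuring $\bar u$ in $H_w^1$ only, and one must check that this weaker pairing still closes the coupled Gronwall system.
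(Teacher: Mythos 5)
Your argument is correct and is essentially the paper's own proof: the same difference system, the same asymmetric pairing $\|\bar\eta\|_{2,w}+\|\bar u\|_{1,w}$ for $c<0$ (via Lemma \ref{lemma23}(ii) on the $\eta$-equation and (i) on the $u$-equation) versus $\|\bar\eta\|_{2,w}+\|\bar u\|_{2,w}$ for $c=0$, closed by Gronwall's lemma. The only cosmetic difference is that you bound the product term $(u_1+u_2)\bar u$ through the $L^\infty$ embedding of Lemma \ref{lemma21}, where the paper invokes the algebra estimate of Lemma \ref{lemma22}; both are valid and interchangeable here.
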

\begin{proof}
Assume first that $c<0$ and let $(\eta_{i},u_{i}), i=1,2$ be two solutions of (\ref{intf1}), (\ref{intf2}). If $\eta=\eta_{1}-\eta_{2}, u=u_{1}-u_{2}$, then we have
\begin{eqnarray*}
\eta(x,t)&=&\int_{0}^{t}A_{D}(u+\eta_{1} u+u_{2}\eta)d\tau,\\
u(x,t)&=&\int_{0}^{t}\left(cA_{D}\eta_{xx}+A_{D}\eta +\frac{1}{2}A_{D}(u(u_{1}+u_{2}))\right)d\tau.
\end{eqnarray*}
Using lemmas \ref{lemma22} and \ref{lemma23}, there is some constant $C>0$ such that
\begin{eqnarray}
||\eta(t)||_{2,w}&\leq & \int_{0}^{t}||A_{D}(u+\eta_{1} u+u_{2}\eta)||_{2,w}d\tau\nonumber\\
&\leq & C\int_{0}^{t}\left(||u(\tau)||_{1,w}+||\eta_{1}(\tau)||_{1,w}||u(\tau)||_{1,w}\right.\nonumber\\
&&\left.+||u_{2}(\tau)||_{1,w}||\eta(\tau)||_{1,w}\right)d\tau\nonumber\\
&\leq & C\left(\left(1+||\eta_{1}||_{C_{T}^{1}}\right)\int_{0}^{t}||u(\tau)||_{1,w}d\tau\right.\nonumber\\
&&\left.+||u_{2}||_{C_{T}^{1}}\int_{0}^{t}||\eta(\tau)||_{1,w}d\tau\right),\label{intf3}
\end{eqnarray}
and
\begin{eqnarray}
||u(t)||_{1,w}&\leq & \int_{0}^{t}\left\|\left(cA_{D}\eta_{xx}+A_{D}\eta +\frac{1}{2}A_{D}(u(u_{1}+u_{2}))\right)\right\|_{1,w}d\tau\nonumber\\
&\leq & C\int_{0}^{t}\left(|c|||\eta_{xx}(\tau)||_{0,w}+||\eta(\tau)||_{0,w}\right.\nonumber\\
&&\left.+\frac{1}{2}||(u_{1}(\tau)+u_{2}(\tau))u(\tau)||_{1,w}\right)d\tau\nonumber\\
&\leq & C\left(\int_{0}^{t}(1+|c|)||\eta(\tau)||_{2,w}d\tau\right.\nonumber\\
&&\left.+\frac{1}{2}||u_{1}+u_{2}||_{C_{T}^{1}}\int_{0}^{t}||u(\tau)||_{1,w}d\tau\right).
\label{intf4}
\end{eqnarray}
Therefore, there is some constant $C>0$ such that for $0\leq t\leq T$
\begin{eqnarray*}
||\eta(t)||_{2,w}+||u(t)||_{1,w}\leq C\int_{0}^{t}\left(||\eta(\tau)||_{2,w}+||u(\tau)||_{1,w}\right)d\tau,
\end{eqnarray*}
from which, by Gronwall's lemma, we have that $\eta=u=0$ and uniqueness follows. In the case $c=0$, we still consider (\ref{intf3}) while, as for (\ref{intf4}), from lemmas \ref{lemma22} and \ref{lemma23} we have
\begin{eqnarray*}
||u(t)||_{2,w}&\leq & \int_{0}^{t}\left\|\left(A_{D}\eta +\frac{1}{2}A_{D}(u(u_{1}+u_{2}))\right)\right\|_{2,w}d\tau\\
&\leq & C\int_{0}^{t}\left(||\eta(\tau)||_{1,w}+\frac{1}{2}||(u_{1}+u_{2})u||_{2,w}\right)d\tau\\
&\leq & C\left(\int_{0}^{t}||\eta(\tau)||_{2,w}d\tau+\frac{1}{2}||u_{1}+u_{2}||_{C_{T}}^{2}\int_{0}^{t}||u(\tau)||_{2,w}d\tau\right).
\end{eqnarray*}
Now
\begin{eqnarray*}
||\eta(t)||_{2,w}+||u(t)||_{2,w}\leq C\int_{0}^{t}\left(||\eta(\tau)||_{2,w}+||u(\tau)||_{2,w}\right)d\tau,
\end{eqnarray*}
for some constant $C>0$ and $0\leq t\leq T$, and the conclusion follows again from Gronwall's lemma.
\end{proof}
The following theorem proves the existence of solution, locally in time, of (\ref{intf1}), (\ref{intf2}) in suitable weighted Sobolev spaces.
\begin{theorem}
\label{theorem25} Let $T>0$, $w$ be a weight defined by (\ref{jacobiw}) for some $-1<\mu<1$, and $h_{i}, v_{i}\in C[0,T], i=1,2$ with the compatibility conditions
\begin{eqnarray*}
&&\eta_{0}(-1)=h_{1}(0),\quad \eta_{0}(1)=h_{2}(0),\\
&&u_{0}(-1)=v_{1}(0),\quad u_{0}(1)=v_{2}(0).
\end{eqnarray*}
Then it holds that:
\begin{itemize}
\item If $c<0$ and  $\eta_{0}\in H_{w}^{2}, u_{0}\in H_{w}^{1}$, there exists $T_{0}=T_{0}(T,||\eta_{0}||_{2,w},||u_{0}||_{1,w})$ such that the system (\ref{intf1}), (\ref{intf2}) has a unique solution $(\eta,u)\in C_{T_{0}}^{2}\times C_{T_{0}}^{1}$ satisfying
\begin{eqnarray}
||\eta||_{C_{T_{0}}^{2}}+||\eta_{t}||_{C_{T_{0}}^{2}}\leq C,\quad
||u||_{C_{T_{0}}^{1}}+||u_{t}||_{C_{T_{0}}^{1}}\leq C,\label{l210}
\end{eqnarray}
for some constant $C=C(T_{0})$.
\item If $c=0$ and  $\eta_{0}\in H_{w}^{2}, u_{0}\in H_{w}^{2}$, there exists $T_{0}=T_{0}(T,||\eta_{0}||_{2,w},||u_{0}||_{2,w})$ such that the system (\ref{intf1}), (\ref{intf2}) has a unique solution $(\eta,u)\in C_{T_{0}}^{2}\times C_{T_{0}}^{2}$ satisfying
\begin{eqnarray}
||\eta||_{C_{T_{0}}^{2}}+||\eta_{t}||_{C_{T_{0}}^{2}}\leq C,\quad
||u||_{C_{T_{0}}^{2}}+||u_{t}||_{C_{T_{0}}^{2}}\leq C,\label{l211}
\end{eqnarray}
for some constant $C=C(T_{0})$.
\end{itemize}
\end{theorem}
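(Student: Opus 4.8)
The plan is to obtain existence by a Banach fixed-point argument for the operator $\mathcal{G}=(\mathcal{G}_1,\mathcal{G}_2)$ determined by the integral formulation \eqref{intf1}, \eqref{intf2}, on a closed ball of the product Banach space $E=C_{T_0}^2\times C_{T_0}^1$ when $c<0$ (resp.\ $E=C_{T_0}^2\times C_{T_0}^2$ when $c=0$), for a sufficiently small $T_0\in(0,T]$. First I would check that the data terms are admissible: by the compatibility conditions and because $w_1,w_2\in H^2_w$, $\eta_0\in H^2_w$ and $u_0\in H^1_w$ (resp.\ $H^2_w$) with $h_i,v_i$ continuous, one has $H\in C_T^2$ and $U\in C_T^1$ (resp.\ $C_T^2$). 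I would then set $M:=\|(H,U)\|_E$ and work in $B=\{(\eta,u)\in E:\ \|(\eta,u)-(H,U)\|_E\le M\}$, on which the $E$-norm is bounded by $2M$.

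Next I would prove $\mathcal{G}(B)\subset B$, where the regularity bookkeeping is the heart of the matter. For the first component, Lemma~\ref{lemma22} gives $u+\eta u\in H^1_w$, so Lemma~\ref{lemma23}(ii) yields $A_D(u+\eta u)\in H^2_w$ with $\|A_D(u+\eta u)\|_{2,w}\le C(1+\|\eta\|_{1,w})\|u\|_{1,w}$; integration in $\tau$ introduces a factor $T_0$ and keeps $\mathcal{G}_1(\eta,u)\in C_{T_0}^2$. For the second component with $c<0$, the term $c\eta_{xx}\in L^2_w$ (available because $\eta\in H^2_w$) is the least regular, and Lemma~\ref{lemma23}(i) recovers only one derivative, so $A_D(c\eta_{xx})\in H^1_{w,0}$; together with $\eta\in H^1_w$ and $u^2/2\in H^1_w$ (Lemma~\ref{lemma22}) this forces $\mathcal{G}_2(\eta,u)\in C_{T_0}^1$ and no better, which is exactly why only $u\in C^1$ is claimed for $c<0$. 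When $c=0$ this term is absent, every argument of $A_D$ lies in $H^1_w$, Lemma~\ref{lemma23}(ii) applies throughout, and $\mathcal{G}_2(\eta,u)\in C_{T_0}^2$. In either case, bounding the integrals as above and taking $T_0$ small makes the integral contribution at most $M$, so $\mathcal{G}$ maps $B$ into itself.

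Then I would establish that $\mathcal{G}$ is a contraction on $B$. The estimate for $\mathcal{G}(\eta_1,u_1)-\mathcal{G}(\eta_2,u_2)$ is essentially the computation already performed in Proposition~\ref{proposition24}: the data terms $H,U$ cancel, the products are split bilinearly as in \eqref{intf3}, \eqref{intf4}, and Lemmas~\ref{lemma22} and~\ref{lemma23} bound every resulting term, the only change being that the prefactors are now controlled by the ball bound $2M$ rather than by a single solution's norm. Since each term carries a factor $T_0$ from the time integral, shrinking $T_0$ renders the Lipschitz constant strictly less than one, so the contraction mapping principle supplies a unique fixed point $(\eta,u)\in B$, which is the sought solution. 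Its uniqueness in the whole space $C_{T_0}^2\times C_{T_0}^1$ (resp.\ $C_{T_0}^2\times C_{T_0}^2$) is already guaranteed by Proposition~\ref{proposition24}.

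Finally I would recover the boundary conditions and the bounds \eqref{l210}, \eqref{l211}. Because $A_Df\in H^1_{w,0}$ vanishes at $x=\pm1$, the integral part contributes nothing at the endpoints, and the compatibility conditions give $H(\pm1,t)=h_i(t)$, $U(\pm1,t)=v_i(t)$, so the Dirichlet data are attained. Differentiating \eqref{intf1}, \eqref{intf2} in $t$ and using the fundamental theorem of calculus on the integral terms gives $\eta_t=H_t+A_D(u+\eta u)$ and $u_t=U_t+A_D(c\eta_{xx}+\eta+u^2/2)$; the $A_D$-terms are bounded in the relevant norms exactly as in the self-mapping step, which yields the stated control of $\eta_t,u_t$ once the boundary-data contributions $H_t,U_t$ are accounted for. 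The hard part will be the regularity bookkeeping of the self-mapping step when $c<0$: I must make sure that $\eta$ really belongs to $H^2_w$ so that $c\eta_{xx}$ is meaningful in $L^2_w$, that every nonlinear product stays in $H^1_w$ via the weighted algebra property, and that continuity in $t$ into the correct weighted space is preserved. This asymmetry between the two components, absent in the $c=0$ case, is precisely the feature of the weighted setting that forces the two separate statements.
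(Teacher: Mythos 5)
Your proposal is correct and follows essentially the same route as the paper: a contraction-mapping argument for the integral operator (\ref{intf1})--(\ref{intf2}) on a closed ball of $E$ for small $T_{0}$, with the self-mapping and Lipschitz estimates supplied by Lemmas \ref{lemma22} and \ref{lemma23} (exactly the computations of Proposition \ref{proposition24}), uniqueness delegated to Proposition \ref{proposition24}, and the bounds on $\eta_{t},u_{t}$ obtained from the differentiated fixed-point relation together with Lemma \ref{lemma23}. The only cosmetic differences are that the paper centers its ball at the origin with radius $R=2\gamma_{0}$, $\gamma_{0}\geq\|(H,U)\|_{E}$, rather than at $(H,U)$, and that in the final step it writes $\eta_{t}=A_{D}((1+\eta)u)$, $u_{t}=A_{D}(c\eta_{xx}+\eta+u^{2}/2)$ without the boundary-data contributions $H_{t},U_{t}$ that you retain.
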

\begin{proof}
The proof is similar to that of Proposition 2.2 in \cite{ADM1}. 
Let us first assume that $c<0$. Let $T_{0}>0$ (to be specified later) and consider the Banach space $E=C_{T_{0}}^{2}\times C_{T_{0}}^{1}$ with norm (\ref{Enorm}),
and the mapping $\Gamma:E\rightarrow E$
\begin{eqnarray*}
\Gamma(\alpha,\beta)(t)=\begin{pmatrix} H+\int_{0}^{t}A_{D}((1+\alpha)\beta)d\tau\\U+\int_{0}^{t}\left(cA_{D}\alpha_{xx}+A_{D}\left(\alpha+\frac{\beta^{2}}{2}\right)\right)d\tau\end{pmatrix},
\end{eqnarray*}
for $0\leq t\leq T_{0}$ and where $H=H(x,t), U=U(x,t)$ are given by (\ref{intf2b}). Let $B_{R}$ be the closed ball in $E$ centered at zero and with radius $R>0$. Let $(\eta_{i},u_{i})\in B_{R}, i=1,2$ and $\eta=\eta_{1}-\eta_{2}, u=u_{1}-u_{2}$. By using again the estimates obtained in Proposition \ref{proposition24} for uniqueness, we have
\begin{eqnarray*}
\left|\left|\int_{0}^{t}\left(A_{D}((1+\eta_{1})u_{1})-A_{D}((1+\eta_{2})u_{2})\right)d\tau\right|\right|_{2,w}=\left|\left|\int_{0}^{t}A_{D}(u+\eta_{1}u+u_{2}\eta)d\tau\right|\right|_{2,w}&&\\
\leq  C_{1}\int_{0}^{t}\left((1+||\eta_{1}(\tau)||_{1,w})||u(\tau)||_{1,w}+||u_{2}(\tau)||_{1,w}||\eta(\tau)||_{1,w}\right)d\tau,&&
\end{eqnarray*}
and
\begin{eqnarray*}
\left|\left|\int_{0}^{t}\left(cA_{D}\eta_{1xx}+A_{D}\left(\eta_{1}+\frac{u_{1}^{2}}{2}\right)-
cA_{D}\eta_{2xx}+A_{D}\left(\eta_{2}+\frac{u_{2}^{2}}{2}\right)\right)d\tau\right|\right|_{1,w}&&\\
=\left|\left|\int_{0}^{t}\left(cA_{D}\eta_{xx}+A_{D}\eta
+\frac{1}{2}A_{D}\left((u_{1}+u_{2})u\right)\right)d\tau\right|\right|_{1,w}&&\\
\leq C_{2}\left((1+|c|)\int_{0}^{t}||\eta(\tau)||_{2,w}+\frac{1}{2}\int_{0}^{t}||u_{1}+u_{2}||_{1,w}||u(\tau)||_{1,w}d\tau\right),
\end{eqnarray*}
for some constants $C_{1}, C_{2}>0$. Therefore
\begin{eqnarray*}
||\Gamma(\eta_{1},u_{1})-\Gamma(\eta_{2},u_{2})||_{E}&\leq & C_{1}T_{0}\left((1+||\eta_{1}||_{C_{T_{0}}^{1}})||u_{1}-u_{2}||_{C_{T_{0}}^{1}}\right.\\
&&\left.+||u_{2}||_{C_{T_{0}}^{1}}||\eta_{1}-\eta_{2}||_{C_{T_{0}}^{1}}\right)\\
&&+C_{2}T_{0}\left((1+|c|)||\eta_{1}-\eta_{2}||_{C_{T_{0}}^{2}}\right.\\
&&\left.+\frac{1}{2}||u_{1}+u_{2}||_{C_{T_{0}}^{1}}||u_{1}-u_{2}||_{C_{T_{0}}^{1}}\right)\\
&\leq & \widetilde{C}(T_{0})||(\eta_{1},u_{1})-(\eta_{2},u_{2})||_{E},
\end{eqnarray*}
where 
\begin{eqnarray*}
\widetilde{C}(T_{0}))=T_{0}(\underbrace{C_{1}(1+2R)+C_{2}(1+|c|+R)}_{\overline{C}}).
\end{eqnarray*}
Moreover, if $(\eta,u)\in B_{R}$ then
\begin{eqnarray*}
||\Gamma(\eta,u)||_{E}&\leq & ||\Gamma(\eta,u)-\Gamma(0,0)||_{E}+||\Gamma(0,0)||_{E}\\
&\leq & \widetilde{C}(T_{0})R+||(H,U)||_{E},
\end{eqnarray*}
and
\begin{eqnarray*}
||(H,U)||_{E}&=&||H||_{C_{T_{0}}^{2}}+||U||_{C_{T_{0}}^{1}}\\
&=&\sup_{0\leq t\leq T_{0}}||H(t)||_{2,w}+\sup_{0\leq t\leq T_{0}}||U(t)||_{1,w},
\end{eqnarray*}
with, for $0\leq t\leq T$
\begin{eqnarray*}
||H(t)||_{2,w}&\leq & ||\eta_{0}||_{2,w}+\frac{|h_{1}(t)-h_{1}(0)|}{\alpha}||w_{2}||_{2,w}\\
&&+\frac{|h_{2}(t)-h_{2}(0)|}{\alpha}||w_{1}||_{2,w},
\end{eqnarray*}
where $\alpha={\rm sinh}(2/\sqrt{b})$. Then
\begin{eqnarray*}
\sup_{0\leq t\leq T_{0}}||H(t)||_{2,w}&\leq & ||\eta_{0}||_{2,w}+\frac{2}{\alpha}||w_{2}||_{2,w}\sup_{0\leq t\leq T}|h_{1}(t)|\\
&&+\frac{2}{\alpha}||w_{1}||_{2,w}\sup_{0\leq t\leq T}|h_{2}(t)|=:G_{0},
\end{eqnarray*}
and similarly it can be shown that
\begin{eqnarray*}
\sup_{0\leq t\leq T_{0}}||U(t)||_{1,w}&\leq & ||u_{0}||_{1,w}+\frac{2}{\alpha}||w_{2}||_{1,w}\sup_{0\leq t\leq T}|v_{1}(t)|\\
&&+\frac{2}{\alpha}||w_{1}||_{1,w}\sup_{0\leq t\leq T}|v_{2}(t)|=:G_{1}.
\end{eqnarray*}
If we take $\gamma_{0}=G_{0}+G_{1}$, then 
\begin{eqnarray*}
||\Gamma(\eta,u)||_{E}\leq \widetilde{C}(T_{0})R+\gamma_{0}.
\end{eqnarray*}
We choose $R=2\gamma_{0}$ and $T_{0}\leq \min\{T,\frac{1}{2\overline{C}}\}$. Then $\widetilde{C}(T_{0})R+\gamma_{0}=\overline{C}T_{0}R+\gamma_{0}\leq R$ and the contraction mapping theorem applies to $\Gamma$ as a mapping $\Gamma:B_{R}\rightarrow B_{R}$. Thus, $\Gamma$ has a unique fixed point $(\eta,u)\in B_{R}$, which is the solution of (\ref{intf1}), (\ref{intf2}). Uniqueness comes from Proposition \ref{proposition24}. Furthermore, from the representation of (\ref{BS2})
\begin{eqnarray*}
\eta_{t}&=&A_{D}\left((1+\eta)u\right),\\
u_{t}&=&A_{D}\left(c\eta_{xx}+\eta+\frac{u^{2}}{2}\right),
\end{eqnarray*}
with $A_{D}$ given by (\ref{jacob23b}), and lemma \ref{lemma23}, we have, for $0\leq t\leq T_{0}$
\begin{eqnarray*}
||\eta_{t}(t)||_{2,w}&\leq & C||(1+\eta(t))u(t)||_{1,w},\\
||u_{t}(t)||_{1,w}&\leq & C||c\eta_{xx}(t)+\eta(t)+\frac{u(t)^{2}}{2}||_{0,w},
\end{eqnarray*}
for some constant $C$, which completes the proof of (\ref{l210}).

When $c=0$, we must consider $E=C_{T_{0}}^{2}\times C_{T_{0}}^{2}$ with the corresponding norm given by (\ref{Enorm}),
and the mapping $\Gamma:E\rightarrow E$
\begin{eqnarray*}
\Gamma(\alpha,\beta)(t)=\begin{pmatrix} H+\int_{0}^{t}A_{D}((1+\alpha)\beta)d\tau\\U+\int_{0}^{t}\left(A_{D}\left(\alpha+\frac{\beta^{2}}{2}\right)\right)d\tau\end{pmatrix},
\end{eqnarray*}
and it is not hard to adapt the arguments above in order to obtain (\ref{l211}).
\end{proof}
\begin{remark}
\label{remark23}
As in \cite{ADM1}, higher regularity of the solution can be derived from the hypothesis of more regular data. Using Remark \ref{remark21}, we can also extend the previous result in the following way: when $h_{i}, v_{i}\in C^{l}([0,T])$ with $l\geq 1$, then there holds:
\begin{itemize}
\item If $c<0$ and $\eta_{0}\in H_{w}^{m+2}, u_{0}\in H_{w}^{m+1}, m\geq 1$, then the solution of (\ref{intf1}), (\ref{intf2}) satisfies $(\eta,u)\in C_{T_{0}}^{m+2}\times C_{T_{0}}^{m+1}$ with $(\eta_{t}^{k},u_{t}^{k})\in C_{T_{0}}^{m+2}\times C_{T_{0}}^{m+1}$ for $1\leq k\leq l$.
\item If $c=0$ and $\eta_{0}\in H_{w}^{m+2}, u_{0}\in H_{w}^{m+2}, m\geq 1$, then the solution of (\ref{intf1}), (\ref{intf2}) satisfies $(\eta,u)\in C_{T_{0}}^{m+2}\times C_{T_{0}}^{m+2}$ with $(\eta_{t}^{k},u_{t}^{k})\in C_{T_{0}}^{m+2}\times C_{T_{0}}^{m+2}$ for $1\leq k\leq l$.
\end{itemize}
\end{remark}

\section{Numerical approximation}
\label{sec3}
In \cite{ADM2}, the ibvp (\ref{BS2}) is approximated using the standard Galerkin-finite element method for the spatial discretization and a fourth-order explicit RK scheme as time integrator. The present paper proposes a different numerical treatment, approximating the ibvp in space with a spectral Galerkin method based on Jacobi polynomials while the time integration is chosen according to the preservation of some stability issues that may improve the performance of the full discretization in long time simulations and with nonregular data.

\subsection{Weak formulation of the homogeneous problem}
The description and analysis of the spectral discretizations will be done by considering a related problem with homogeneous boundary conditions. Assume that $h_{i}, v_{i}\in C^{1}, i=1,2$. The functions
\begin{eqnarray*}
\overline{\eta}(x,t)&=&\left(\frac{h_{2}(t)-h_{1}(t)}{2}\right)(x+1)+h_{1}(t),\\
\overline{u}(x,t)&=&\left(\frac{v_{2}(t)-v_{1}(t)}{2}\right)(x+1)+v_{1}(t),
\end{eqnarray*}
satisfy (\ref{BS2b}), (\ref{BS2c}). Let $\eta, u$ be the solution of (\ref{BS2}). Then $\widetilde{\eta}=\eta-\overline{\eta}, \widetilde{u}=u-\overline{u}$  solve a problem of the form (tildes are dropped)
\begin{eqnarray}
&&\eta_{t}+((1+\overline{\eta}+\eta) u)_{x}+(\overline{u}\eta)_{x}-b\eta_{xxt}+\alpha(x,t)=0,\quad x\in\overline{\Omega}, \;t>0,\nonumber\\
&&u_{t}+\eta_{x}+(\overline{u}u)_{x}+uu_{x}+c\eta_{xxx}-bu_{xxt}+\beta(x,t)=0,\label{BS2d}\\
&&\eta(x,0)=\eta_{0}(x)-\overline{\eta}(x,0)=:\widetilde{\eta}_{0}(x), \quad u(x,0)=u_{0}(x)-\overline{u}(x,0)=\widetilde{u}_{0}(x),\quad x\in\overline{\Omega},\nonumber\\
&&\eta(\pm 1,t)=u(\pm 1,t)=0,\quad t>0,\nonumber
\end{eqnarray}
and 
\begin{eqnarray*}
\alpha&=&(\overline{\eta}\overline{u})_{x}+\overline{\eta}_{t}+\overline{u}_{x}=\partial_{x}\widetilde{\alpha}, \\
\beta&=&\overline{u}\overline{u}_{x}+\overline{u}_{t}+\overline{\eta}_{x}=\partial_{x}\widetilde{\beta},
\end{eqnarray*}
with

\begin{eqnarray}
\widetilde{\alpha}(x,t)&=&\alpha_{1}(t)\frac{(x+1)^{2}}{2}+\alpha_{2}(t)(x+1),\nonumber\\
\widetilde{\beta}(x,t)&=&\beta_{1}(t)\frac{(x+1)^{2}}{2}+\beta_{2}(t)(x+1),\label{BS2e}\\
\alpha_{1}(t)&=&\frac{(h_{2}(t)-h_{1}(t))(v_{2}(t)-v_{1}(t))}{2}+\frac{(h_{2}'(t)-h_{1}'(t))}{2},\nonumber\\
 \alpha_{2}(t)&=&\frac{(h_{2}(t)-h_{1}(t))}{2}v_{1}(t)+\frac{(v_{2}(t)-v_{1}(t))}{2}(1+h_{1}(t))\nonumber\\
&&+h_{1}'(t)+\frac{(v_{2}(t)-v_{1}(t))}{2},\nonumber\\
\beta_{1}(t)&=&\left(\frac{(v_{2}(t)-v_{1}(t))}{2}\right)^{2}+\frac{(v_{2}'(t)-v_{1}'(t))}{2},\nonumber\\
\beta_{2}(t)&=&\frac{(v_{2}(t)-v_{1}(t))}{2}v_{1}(t)+\frac{(v_{2}(t)-v_{1}(t))}{2}v_{1}(t)\nonumber\\
&&+v_{1}'(t)+\frac{(h_{2}(t)-h_{1}(t))}{2}.\nonumber
\end{eqnarray}
Now, the problem (\ref{BS2d}), (\ref{BS2e}) admits the following weak formulation
\begin{eqnarray*}
A_{b}(\eta_{t},\psi)+B_{1}(\eta,u,\psi)+(\alpha,\psi)_{w}&=&0,\label{BS24a}\\
A_{b}(u_{t},\phi)+B_{2}(\eta,u,\phi)+(\beta,\phi)_{w}&=&0,\label{BS24b}
\end{eqnarray*}
for $\phi, \psi\in H_{w,0}^{1}$, where $A_{b}$ is given by (\ref{ad26}) and
\begin{eqnarray*}
B_{1}(\eta,u,\psi)&=&((1+\overline{\eta})u+\overline{u}\eta+\eta u)_{x},\psi)_{w},\nonumber\\
B_{2}(\eta,u,\phi)&=&((\eta+\overline{u}u)_{x}+uu_{x},\phi)_{w}+|c|L(\eta_{xx},\phi),\label{BS25}
\end{eqnarray*}
with $L$ given by (\ref{ad26a}).
\subsection{Spectral Galerkin approximation}
In this section the spectral Galerkin approximation to (\ref{BS2}) will be analyzed via the semidiscretization of the homogeneous problem (\ref{BS2d}) as follows. We first make use of the elliptic projection $R_{N}v\in\mathbb{P}_{N}^{0}$ of $v\in H_{w,0}^{1}$ defined in (\ref{ad27}). Note that, evaluating (\ref{ad27}) at $\varphi=R_{N}v$ and using ellipticity and continuity of $A_{b}$ there holds
\begin{eqnarray}
||R_{N}v||_{1,w}\leq C||v||_{1,w},\label{l36}
\end{eqnarray}
for some constant $C$. In addition, assume that $v\in H_{w}^{2}\cap H_{w,0}^{1}$. Then, from (\ref{ad27}) with $\varphi\in H_{w,0}^{1}$
\begin{eqnarray*}
A_{b}(R_{N}v,\varphi)=A_{b}(v,\varphi)=(v,\varphi)_{w}+bL(v,\varphi)=(v-bv'',\varphi)_{w}.
\end{eqnarray*}
Lax-Milgram theorem implies that $R_{N}v\in H_{w}^{2}$ and
\begin{eqnarray}
||R_{N}v||_{2,w}\leq C||v-bv''||_{0,w}\leq C||v||_{2,w},\label{l37}
\end{eqnarray}
for some constant $C$.

Let $-1<\mu<1$ and $w$ be the corresponding weight defined in (\ref{jacobiw}). Let $[0,T]$ be the maximal interval of existence and uniqueness of solution of (\ref{BS2}) (or, equivalently, (\ref{BS2d}) ) in $C_{T}^{2}\times C_{T}^{1}$ if $c<0$ and $C_{T}^{2}\times C_{T}^{2}$ if $c=0$.
Let $N\geq 2$ be an integer. The semidiscrete Galerkin approximation to (\ref{BS2d}) is defined as the pair $\eta^{N}, u^{N}:[0,T]\rightarrow\mathbb{P}_{N}^{0}$ satisfying for any $\psi, \phi\in\mathbb{P}_{N}^{0}$ and $0\leq t\leq T$
\begin{eqnarray}
A_{b}(\eta_{t}^{N},\psi)+B_{1}(\eta^{N},u^{N},\psi)+(\alpha,\psi)_{w}&=&0,\label{BS26a}\\
A_{b}(u_{t}^{N},\phi)+B_{2}(\eta^{N},u^{N},\phi)+(\beta,\phi)_{w}&=&0,\label{BS26b}
\end{eqnarray}
with
\begin{eqnarray}
\eta^{N}(0)=R_{N}\widetilde{\eta}_{0},\quad
u^{N}(0)=R_{N}\widetilde{u}_{0}.\label{BS26c}
\end{eqnarray}
An alternative formulation of (\ref{BS26a})-(\ref{BS26c}) will be used below in the proof of existence and the error estimates, cf. \cite{ADM2}. We consider the restriction of $A_{b}$ to $H_{w,0}^{1}\times H_{w,0}^{1}$ (denoted again for simplicity as $A_{b}$) and defined, via Lax-Milgram theorem, $\widehat{f}:L_{w}^{2}\rightarrow \mathbb{P}_{N}^{0}$ satisfying, for any $\varphi\in \mathbb{P}_{N}^{0}$
\begin{eqnarray}
A_{b}(\widehat{f}(v),\varphi)=-(v',\varphi)_{w}=\int_{-1}^{1}v(\varphi w)_{x}dx.\label{l310} 
\end{eqnarray}
From (\ref{l310}) let
\begin{eqnarray}
&&\widehat{g}:H_{w}^{2}\rightarrow \mathbb{P}_{N}^{0},\quad \widehat{g}(v)=c\widehat{f}(v'')+\widehat{f}(v),\nonumber\\
&&F:H_{w}^{1}\times H_{w}^{1}\rightarrow \mathbb{P}_{N}^{0},\quad F(v,w)=\widehat{f}(vw).\label{l311}
\end{eqnarray}
Using (\ref{l311}) we define $f,g:H_{w}^{1}\times H_{w}^{1}\rightarrow \mathbb{P}_{N}^{0}$
\begin{eqnarray}
f(\eta,u)&=&F(1+\overline{\eta},u)+F(\eta,\overline{u})+F(\eta,u)+\widehat{f}(\widetilde{\alpha}),\nonumber\\
g(\eta,u)&=&\widetilde{g}(\eta)+\frac{1}{2}F(u,u)+F(\overline{u},u)+\widehat{f}(\widetilde{\beta}),\label{l312}
\end{eqnarray}
and write the semidiscrete problem (\ref{BS26a})-(\ref{BS26c}) as
\begin{eqnarray}
&&\partial_{t}\eta^{N}=f(\eta^{N},u^{N}),\quad \eta^{N}(0)=R_{N}\widetilde{\eta}_{0},\nonumber\\
&&\partial_{t}u^{N}=g(\eta^{N},u^{N}),\quad u^{N}(0)=R_{N}\widetilde{u}_{0}.\label{l313}
\end{eqnarray}
If we represent $\eta^{N}, u^{N}$ in a basis of $\mathbb{P}_{N}^{0}$ then (\ref{l313}) yields an ode system for the corresponding coefficients. Continuity and locally Lipschitz property of (\ref{l312}) ensures local existence of solution $(\eta^{N}, u^{N})$ of (\ref{l313}). The extension of solution to the final time $t=T$ requires additional properties of $f$ and $g$, proved in the following lemma.
\begin{lemma}
\label{lemma31} There is a constant $C>0$ such that
\begin{eqnarray}
||\widehat{f}(v)||_{1,w}&\leq &C||v||_{0,w},\quad v\in L_{w}^{2},\label{l314a}\\
||\widehat{f}(v)||_{2,w}&\leq &C||v||_{1,w},\quad v\in L_{w}^{2}.\label{l314b}
\end{eqnarray}
\end{lemma}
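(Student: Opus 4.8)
The plan is to recognize $\widehat{f}(v)$ as the elliptic projection onto $\mathbb{P}_{N}^{0}$ of the solution of the continuous problem (\ref{jacob24}), and then to combine the stability of that projection with the estimates of Lemma \ref{lemma23}. First I would observe that, for $v\in L_{w}^{2}$, the functional $\varphi\mapsto \int_{-1}^{1} v(\varphi w)_{x}\,dx$ on the right-hand side of (\ref{l310}) is well defined and bounded on $H_{w,0}^{1}$, so that the continuous problem (\ref{jacob24}) with datum $f=v$ has, by Lax-Milgram and Lemma \ref{lemma23}, a unique solution $V:=A_{D}v\in H_{w,0}^{1}$ obeying $A_{b}(V,\varphi)=-(v',\varphi)_{w}$ for all $\varphi\in H_{w,0}^{1}$.

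Comparing this identity, restricted to $\varphi\in\mathbb{P}_{N}^{0}$, with the defining relation (\ref{l310}) for $\widehat{f}(v)$ gives $A_{b}(\widehat{f}(v)-V,\varphi)=0$ for every $\varphi\in\mathbb{P}_{N}^{0}$. Since $R_{N}V\in\mathbb{P}_{N}^{0}$ satisfies the same orthogonality (\ref{ad27}) and the Galerkin solution in $\mathbb{P}_{N}^{0}$ is unique by ellipticity of $A_{b}$, I would deduce the key identity $\widehat{f}(v)=R_{N}(A_{D}v)$.

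With this identity the two bounds are immediate. For (\ref{l314a}), the $H_{w}^{1}$-stability (\ref{l36}) of the elliptic projection together with part (i) of Lemma \ref{lemma23} gives
$$||\widehat{f}(v)||_{1,w}=||R_{N}(A_{D}v)||_{1,w}\leq C||A_{D}v||_{1,w}\leq C||v||_{0,w}.$$
For (\ref{l314b}) I would take $v\in H_{w}^{1}$, so that $A_{D}v\in H_{w}^{2}\cap H_{w,0}^{1}$ by part (ii) of Lemma \ref{lemma23}, and then use the $H_{w}^{2}$-stability (\ref{l37}) to obtain
$$||\widehat{f}(v)||_{2,w}=||R_{N}(A_{D}v)||_{2,w}\leq C||A_{D}v||_{2,w}\leq C||v||_{1,w}.$$

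The only genuinely delicate point is the first step, namely justifying that (\ref{l310}) is the discrete counterpart of (\ref{jacob24}) and hence that $\widehat{f}(v)$ is the Galerkin projection of $A_{D}v$. A direct route avoiding this identification --- testing (\ref{l310}) with $\varphi=\widehat{f}(v)$ and invoking ellipticity --- would instead force me to control the term $\int_{-1}^{1} v\,\widehat{f}(v)\,w_{x}\,dx$, in which $w_{x}/w=-2\mu x/(1-x^{2})$ is singular at $x=\pm 1$; bounding it would require a weighted Hardy-type inequality on $H_{w,0}^{1}$, which is precisely the technicality the projection identity lets me bypass. I therefore expect the bulk of the work to be the clean statement of the equivalence of the two formulations, after which the estimates reduce to applying (\ref{l36}), (\ref{l37}) and Lemma \ref{lemma23}.
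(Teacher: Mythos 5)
Your proof is correct, but it follows a genuinely different route from the paper's, which is much more direct. The paper proves (\ref{l314a}) by working entirely at the discrete level: it tests (\ref{l310}) with $\varphi=\widehat{f}(v)$, so that ellipticity of $A_{b}$ on $H_{w,0}^{1}\times H_{w,0}^{1}$ gives $C\|\widehat{f}(v)\|_{1,w}^{2}\leq A_{b}(\widehat{f}(v),\widehat{f}(v))=\int_{-1}^{1}v\,(\widehat{f}(v)\,w)_{x}\,dx$, and then invokes the continuity of $L$ from (\ref{ad26a}) (the weighted bound $|\int_{-1}^{1}v\,(\varphi w)_{x}\,dx|\leq C\|v\|_{0,w}\|\varphi\|_{1,w}$ for $\varphi\in H_{w,0}^{1}$, a cited fact from the Bernardi--Maday theory) to close the estimate; for (\ref{l314b}) it applies the Lax--Milgram/elliptic-regularity argument (the same one that produced (\ref{l37})) directly to the discrete problem with datum $-v'\in L_{w}^{2}$. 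You instead establish the factorization $\widehat{f}(v)=R_{N}(A_{D}v)$ and compose the stability bounds (\ref{l36}) and (\ref{l37}) for the projection $R_{N}$ with the mapping properties of $A_{D}$ in Lemma \ref{lemma23}. Your identification argument (both polynomials solve the same discrete variational problem, which has a unique solution by ellipticity) is sound, and the resulting proof is a legitimate, more modular alternative: it reuses machinery the paper has already set up, at the cost of needing the weak formulation of the continuous problem for rough data, which the paper's direct argument never requires.

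One caveat about your closing commentary: the claim that the factorization lets you \emph{bypass} the weighted Hardy-type inequality is not accurate. Your very first step---asserting that $\varphi\mapsto\int_{-1}^{1}v\,(\varphi w)_{x}\,dx$ is bounded on $H_{w,0}^{1}$ with norm $\leq C\|v\|_{0,w}$---\emph{is} that inequality; it is exactly what the paper calls continuity of $L$, and it also underlies the paper's proof of Lemma \ref{lemma23}(i) via integration by parts. Moreover, the ``direct route'' you describe as problematic is precisely the paper's proof of (\ref{l314a}), and it is unproblematic because this continuity is an established result with a constant independent of $N$. Both routes therefore rest on the same weighted-space ingredient; yours merely relocates it inside the cited lemmas instead of invoking it in the final estimate.
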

\begin{proof}
The inequality (\ref{l314a}) is obtained from the second form of the right-hand side of (\ref{l310}), evaluated at $\varphi=\widehat{f}(v)$, ellipticity of $A_{b}$ on $H_{w,0}^{1}\times H_{w,0}^{1}$, and continuity of $L$ defined in (\ref{ad26a}). On the other hand, (\ref{l314b}) follows from the application of Lax-Milgram theorem to (\ref{l310}) with the first form of its right-hand side.
\end{proof}
The following theorem is the main result of existence and convergence of the semidiscrete approximation in the norm of suitable weighted Sobolev spaces.
\begin{theorem}
\label{theorem32} Let $T>0$, $N\geq 1$ be an integer, $m>3$ and let $(\eta,u)\in E$ be a solution of (\ref{BS2d}), (\ref{BS2e}) with
\begin{eqnarray*}
E=\left\{\begin{matrix}C_{T}^{m}\times C_{T}^{m-1}&c<0\\C_{T}^{m}\times C_{T}^{m}&c=0\end{matrix}\right.
\end{eqnarray*}
Then  there is a unique solution $(\eta^{N},u^{N})$ of (\ref{l313}) on $[0,T]$ with
\begin{eqnarray}
||(\eta^{N},u^{N})||_{E}\leq C,\label{l315}
\end{eqnarray}
for some constant $C$.
Furthermore, there is constant $C>0$ such that for $0\leq t\leq T$
\begin{eqnarray}
||\eta(t)-\eta^{N}(t)||_{2,w}+||u(t)-u^{N}(t)||_{1,w}\leq C N^{3-m},\label{l315a}
\end{eqnarray}
if $c<0$ and
\begin{eqnarray}
||\eta(t)-\eta^{N}(t)||_{2,w}+||u(t)-u^{N}(t)||_{2,w}\leq C N^{1-m},\label{l315b}
\end{eqnarray}
if $c=0$.
\end{theorem}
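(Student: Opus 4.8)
The plan is to adapt the elliptic-projection energy argument of \cite{ADM2} to the present weighted setting, using the discrete operators $\widehat{f},\widehat{g}$ of (\ref{l310})--(\ref{l311}), the regularity estimates of Lemma \ref{lemma31}, and the algebra property of Lemma \ref{lemma22}. The organizing remark is that $\widehat{f}=R_{N}\circ A_{D}$: comparing (\ref{l310}) with the weak form (\ref{jacob24}) of $A_{D}$ gives $A_{b}(\widehat{f}(v)-A_{D}v,\varphi)=0$ for all $\varphi\in\mathbb{P}_{N}^{0}$, so $\widehat{f}(v)$ is precisely the $A_{b}$-projection of $A_{D}v$ into $\mathbb{P}_{N}^{0}$. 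Hence the consistency error of the scheme is a projection error of a \emph{smooth} function, to be measured by (\ref{ad27b}) and (\ref{ad27d}). Local existence and uniqueness of (\ref{l313}) follow from continuity and the local Lipschitz property of $f,g$ (a consequence of Lemmas \ref{lemma31} and \ref{lemma22}); the extension to $[0,T]$ and the bound (\ref{l315}) I would obtain by a continuation argument, keeping $\|\eta^{N}\|_{1,w},\|u^{N}\|_{1,w}$ within a fixed multiple of the norm of the exact solution so that the nonlinear coefficients below stay $N$-independent.

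For the error I set $\rho=\eta-R_{N}\eta$, $\theta=R_{N}\eta-\eta^{N}$, $\sigma=u-R_{N}u$, $\xi=R_{N}u-u^{N}$, with $\theta,\xi\in\mathbb{P}_{N}^{0}$ and $\theta(0)=\xi(0)=0$ by (\ref{BS26c}). Subtracting the semidiscrete equations (\ref{BS26a})--(\ref{BS26b}) from the continuous weak formulation and invoking (\ref{ad27}) to annihilate $A_{b}(\rho_{t},\psi)$ and $A_{b}(\sigma_{t},\phi)$ for $\psi,\phi\in\mathbb{P}_{N}^{0}$, one reaches $A_{b}(\theta_{t},\psi)=-[B_{1}(\eta,u,\psi)-B_{1}(\eta^{N},u^{N},\psi)]$ and its $\xi$-analogue. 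Expanding the nonlinear terms, rewriting $(\partial_{x}(\cdot),\psi)_{w}=-A_{b}(\widehat{f}(\cdot),\psi)$ and the dispersive contribution through $\widehat{g}$, and testing with $\theta_{t}-\widehat{f}(\cdots)\in\mathbb{P}_{N}^{0}$ together with the ellipticity of $A_{b}$ collapses these to the operator identities
\begin{eqnarray*}
\theta_{t}&=&\widehat{f}\big((1+\overline{\eta}+\eta^{N})(\sigma+\xi)+(\overline{u}+u)(\rho+\theta)\big),\\
\xi_{t}&=&\widehat{g}(\rho+\theta)+\widehat{f}\big(\overline{u}(\sigma+\xi)+\tfrac{1}{2}(\sigma+\xi)(u+u^{N})\big).
\end{eqnarray*}

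Next I would estimate. By (\ref{l314b}) and Lemma \ref{lemma22}, $\|\theta_{t}\|_{2,w}\le C(\|\sigma\|_{1,w}+\|\xi\|_{1,w}+\|\rho\|_{1,w}+\|\theta\|_{2,w})$; by (\ref{l314a}), the resulting bound $\|\widehat{g}(v)\|_{1,w}\le C\|v\|_{2,w}$, and Lemma \ref{lemma22}, $\|\xi_{t}\|_{1,w}\le C(\|\rho\|_{2,w}+\|\theta\|_{2,w}+\|\sigma\|_{1,w}+\|\xi\|_{1,w})$ when $c<0$. Adding these, Gronwall's lemma with $\theta(0)=\xi(0)=0$ controls $\|\theta\|_{2,w}+\|\xi\|_{1,w}$ by the projection errors. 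The rate then splits according to $c$. For $c<0$ the term $c\eta_{xxx}$ enters the $u$-equation source, so $\widehat{g}$ acts on $\eta-\eta^{N}$ and the $H^{2}_{w}$ projection error $\|\rho\|_{2,w}=O(N^{3-m})$ of (\ref{ad27d}) becomes binding, dominating the $H^{1}_{w}$ errors $\|\rho\|_{1,w}=O(N^{1-m})$, $\|\sigma\|_{1,w}=O(N^{2-m})$ and giving (\ref{l315a}) after $\|\eta-\eta^{N}\|_{2,w}\le\|\rho\|_{2,w}+\|\theta\|_{2,w}$ (and similarly for $u$). When $c=0$ this stiff term is absent; the source solved by $\widehat{f}$ retains the full $H^{m}_{w}$ regularity of the data, so that the two extra orders of smoothing of $A_{D}$ apply --- one measures the consistency error of $A_{D}(\mathrm{source})\in H^{m+2}_{w}$ rather than of $\eta$ --- and the binding error drops to $O(N^{1-m})$, yielding (\ref{l315b}).

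I expect two places to require real care. The first is closing the \emph{coupled} Gronwall inequality in the mismatched norms $H^{2}_{w}\times H^{1}_{w}$ of the stiff case $c<0$: the $H^{1}_{w}$-error of $u$ is driven through $\widehat{g}$ by the $H^{2}_{w}$-error of $\eta$, and one must verify that the two inequalities feed into one another so that a single scalar Gronwall argument applies with an $N$-independent constant. The second is the \emph{sharp}, $c$-dependent rate: one has to identify correctly which Sobolev order of (\ref{ad27b})/(\ref{ad27d}) is active in each case and, for $c=0$, to arrange the decomposition (and the regularity bookkeeping for the data) so that only the higher-order smoothing of $A_{D}$ is invoked. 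The bootstrap making $(\eta^{N},u^{N})$ uniformly bounded while the estimate is still being established is the remaining technical point.
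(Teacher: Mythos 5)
Your proposal is correct and takes essentially the same route as the paper's proof: the identical elliptic-projection splitting $\rho=\eta-R_{N}\eta$, $\theta=R_{N}\eta-\eta^{N}$, $\sigma=u-R_{N}u$, $\xi=R_{N}u-u^{N}$, the same operator-form error equations (your splitting of $\eta u-\eta^{N}u^{N}$ as $(\rho+\theta)u+\eta^{N}(\xi+\sigma)$ versus the paper's $\eta(\xi+\sigma)+(\rho+\theta)u^{N}$ is an equivalent rearrangement), the same use of Lemma \ref{lemma31} together with (\ref{ad27b})/(\ref{ad27d}), Gronwall from $\theta(0)=\xi(0)=0$, and the same bootstrap bound on $\|u^{N}\|$ to continue the solution to $t=T$, with (\ref{ad27d}) activated only when $c<0$. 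Your additional observation that $\widehat{f}=R_{N}\circ A_{D}$ is a pleasant reformulation of (\ref{l310}) but plays no role beyond what Lemma \ref{lemma31} already supplies.
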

\begin{proof}
We follow and adapt the lines of the proof of Proposition 5 in \cite{ADM2}. We define
\begin{eqnarray*}
\rho=\eta-R_{N}\eta,\quad \theta=R_{N}\eta-\eta^{N},\quad \sigma=u-R_{N}u,\quad \xi=R_{N}u-u^{N}.
\end{eqnarray*}
Then $\eta-\eta^{N}=\rho+\theta$ and $u-u^{N}=\xi+\sigma$. We first assume $c<0$. Using theorem \ref{theorem25}, (\ref{l36}), and (\ref{l37}), let $M>0$ be a constant such that
\begin{eqnarray}
\max_{0\leq t\leq T}\left(||\eta(t)||_{2,w}+||u(t)||_{1,w}\right)\leq M,\label{l316a}
\end{eqnarray}
and
\begin{eqnarray}
||R_{N}\widetilde{\eta}_{0}||_{2,w}+||R_{N}\widetilde{u}_{0}||_{1,w}\leq \frac{3M}{2}.\label{l316c}
\end{eqnarray}
Let $0<t_{N}\leq T$ be the maximal time for which
\begin{eqnarray*}
||u^{N}(t)||_{1,w}\leq 2M,\quad 0\leq t\leq t_{N}.
\end{eqnarray*}
From (\ref{l313}) and (\ref{BS26a}), (\ref{BS26b}) it holds that
\begin{eqnarray*}
\theta_{t}&=&F(1+\overline{\eta},\xi+\sigma)+F(\overline{u},\rho+\theta)+F(\eta,\xi+\sigma)+F(\rho+\theta,u^{N})\nonumber\\
\xi_{t}&=&\widehat{g}(\rho+\theta)+\frac{1}{2}\left(F(u,\xi+\sigma)+F(\sigma+\xi,u^{N})\right)+F(\overline{u},\xi+\sigma).\label{l316}
\end{eqnarray*}
Now we make use of (\ref{l314a}), (\ref{l314b}), (\ref{ad27b}), and (\ref{ad27d}) to have, for $0\leq t\leq t_{N}$
\begin{eqnarray}
||\theta_{t}(t)||_{2,w}&\leq & C\left(N^{1-m}+||\xi(t)||_{1,w}+||\theta(t)||_{1,w}\right),\nonumber\\
||\xi_{t}(t)||_{1,w}&\leq & C\left(N^{3-m}+||\xi(t)||_{1,w}+||\theta(t)||_{2,w}\right).\label{l316b}
\end{eqnarray}
Therefore, for $0\leq t\leq t_{N}$
\begin{eqnarray*}
||\theta(t)||_{2,w}+||\xi(t)||_{1,w}&\leq &||\theta(0)||_{2,w}+||\xi(0)||_{1,w}\\
&&+C\left(\int_{0}^{t}\left(||\theta(\tau)||_{2,w}+||\xi(\tau)||_{1,w}\right)d\tau+N^{3-m}\right).
\end{eqnarray*}
Using the fact that $\theta(0)=\xi(0)=0$ and Gronwall's lemma, there is a constant $C=C(\eta,u,M,T)$ such that
\begin{eqnarray}
||\theta(t)||_{2,w}+||\xi(t)||_{1,w}\leq C N^{3-m},\quad 0\leq t\leq t_{N}.\label{l317}
\end{eqnarray}
Now, due to (\ref{ad27b}), for $0\leq t\leq t_{N}$ there holds
$$||\sigma(t)||_{1,w}\leq C N^{1-m},$$ and since $u^{N}=u-(\xi+\sigma)$, then
\begin{eqnarray*}
||u^{N}(t)||_{1,w}\leq C N^{3-m}+M,
\end{eqnarray*}
and therefore, for $N$ large enough and since $m>3$, we have $||u^{N}(t)||_{1,w}\leq 2M$ for $0\leq t\leq t_{N}$, which contradicts the definition of $t_{N}$. Hence (\ref{l316a}) and (\ref{l317}) hold up to $t=T$, implying (\ref{l315}), and the estimate (\ref{l315a}) holds by using (\ref{l317}), (\ref{ad27b}), and (\ref{ad27d}).

For the case $c=0$, note that $\widehat{g}(v)=\widehat{f}(v)$. Instead of (\ref{l316a}), (\ref{l316c}), we take $M>0$ such that
\begin{eqnarray*}
\max_{0\leq t\leq T}\left(||\eta(t)||_{2,w}+||u(t)||_{2,w}\right)\leq M,
\end{eqnarray*}
and
\begin{eqnarray*}
||R_{N}\widetilde{\eta}_{0}||_{2,w}+||R_{N}\widetilde{u}_{0}||_{2,w}\leq \frac{3M}{2}.
\end{eqnarray*}
Let $0<t_{N}\leq T$ be now the maximal time for which
\begin{eqnarray*}
||u^{N}(t)||_{2,w}\leq 2M,\quad 0\leq t\leq t_{N}.
\end{eqnarray*}
Then, (\ref{l316b}) is clearly substituted by
\begin{eqnarray*}
||\theta_{t}(t)||_{2,w}&\leq & C\left(N^{1-m}+||\xi(t)||_{1,w}+||\theta(t)||_{1,w}\right),\nonumber\\
||\xi_{t}(t)||_{2,w}&\leq & C\left(N^{1-m}+||\xi(t)||_{1,w}+||\theta(t)||_{2,w}\right),
\end{eqnarray*}
where (\ref{l314b}) and (\ref{ad27b}) are only required. The rest of the argument is similar to that of the case $c<0$, leading to (\ref{l315b}).
\end{proof}
\begin{remark}
Note that, when $c<0$, the presence of the term $c\eta_{xxx}$ in (\ref{BS2}) requires the use of the generalized estimate (\ref{ad27d}) for $m>3$ in order to obtain (\ref{l315a}).
\end{remark}
\subsection{Galerkin-Numerical Integration (G-NI) formulation}
\label{sec33}
Described here is the representation used in practice of the semidiscrete Galerkin approximation and its relation with the spectral collocation approach. The representation is based on two main properties: the choice of a suitable basis of $\mathbb{P}_{N}$ and the quadrature formulas for approximating the resulting integrals. Both are somehow related in the following sense. Some features of (\ref{BS2}) (mainly the presence of nonlinear terms) typically require the use of a basis of $\mathbb{P}_{N}$ of nodal type, which implies the choice of certain set of nodes $\{x_{j}\}_{j=0}^{N}$ in $[-1,1]$ and the construction of interpolating polynomials $\psi_{j}(x), j=0,\ldots,N$ such that
\begin{eqnarray}
\psi_{j}(x_{k})=\delta_{jk},\quad j,k=0,\ldots,N.\label{l319b}
\end{eqnarray}
(Other choices, such as bases of modal tyoe, are not discarded, although they seem to give a better performance in the linear case, cf. \cite{CanutoHQZ1988,ShenTW2011}.) Then the selection of the $x_{j}$ as the nodes of quadrature formulas used to approximate the integrals leads to a more convenient description of the semidiscrete system.

This general procedure is now described in more detail for the case at hand. Let $w(x)$ be a weight (\ref{jacobiw}) for $-1<\mu<1$ and let $\{J_{n}\}_{n}$ be the family of the corresponding Jacobi polynomials associated to $w$ (that is, orthogonal to each other in $L_{w}^{2}$). Among several possibilities, the following Gauss-Lobatto-Jacobi formula is used as quadrature (the proof can be seen in e.~g. \cite{BernardiM1997}).
\begin{theorem}
\label{theorem198}
Let $N$ be a positive integer and $\{x_{j}\}_{j=0}^{N}$ be given by $x_{0}=-1, x_{N}=1$ and $\{x_{j}\}_{j=1}^{N-1}$ as the zeros of $J_{N}'(x)$ in $(-1,1)$. Then there exists a unique set of $N+1$ real numbers $w_{j}, j=0,\ldots,N$, such that for any $p\in\mathbb{P}_{2N-1}(-1,1)$ it holds that
\begin{eqnarray}
\int_{-1}^{1}p(x)w(x)dx=\sum_{j=0}^{N}p(x_{j})w_{j}.\label{l320}
\end{eqnarray}
\end{theorem}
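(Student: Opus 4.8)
The plan is to construct the quadrature as the interpolatory rule associated with the $N+1$ nodes $\{x_j\}_{j=0}^{N}$ and then to show that this special choice of interior nodes raises its exactness from degree $N$ up to degree $2N-1$. First I would record that the $N-1$ interior nodes, being the zeros of $J_N'$, are simple and lie in $(-1,1)$: for the symmetric Jacobi weight $w(x)=(1-x^2)^{\mu}$ the derivative $J_N'$ is, up to a constant, a Jacobi polynomial of degree $N-1$ with shifted parameters, so its zeros are real, simple and interior. Together with $x_0=-1$ and $x_N=1$ this gives $N+1$ distinct nodes, so the Lagrange basis polynomials $\ell_j\in\mathbb{P}_{N}$ with $\ell_j(x_k)=\delta_{jk}$ are well defined, and I set $w_j=\int_{-1}^{1}\ell_j(x)w(x)dx$.

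Uniqueness is then immediate and I would dispatch it first. Any weights making the rule exact on $\mathbb{P}_{2N-1}$ are in particular exact on $\mathbb{P}_{N}$ (note $N\leq 2N-1$ for $N\geq 1$); evaluating the rule on each $\ell_k$ forces $w_k=\sum_j\ell_k(x_j)w_j=\int_{-1}^{1}\ell_k w\,dx$, so the weights are determined. By the same identity, the interpolatory weights above make the rule exact on all of $\mathbb{P}_{N}$, since every $p\in\mathbb{P}_{N}$ equals its Lagrange interpolant $\sum_j p(x_j)\ell_j$.

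For exactness up to degree $2N-1$ I would introduce the nodal polynomial $\omega(x)=(1-x^2)J_N'(x)$, of degree $N+1$, whose roots are exactly the $N+1$ nodes. Given $p\in\mathbb{P}_{2N-1}$, Euclidean division by $\omega$ yields $p=\omega q+s$ with $\deg q\leq N-2$ and $\deg s\leq N$. Since $\omega$ vanishes at every node, $\sum_j p(x_j)w_j=\sum_j s(x_j)w_j=\int_{-1}^{1}s\,w\,dx$ by exactness on $\mathbb{P}_{N}$, so the claim reduces to showing $\int_{-1}^{1}\omega(x)q(x)w(x)dx=0$ for every $q\in\mathbb{P}_{N-2}$.

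This last orthogonality is the crux, and I expect it to be the main obstacle. Writing $(1-x^2)w(x)=(1-x^2)^{\mu+1}$ and integrating by parts moves the derivative off $J_N'$; the boundary term carries the factor $(1-x^2)^{\mu+1}$, which vanishes at $x=\pm 1$ precisely because $\mu+1>0$ (here $-1<\mu<1$ is used). What remains is $-\int_{-1}^{1}J_N(x)\,\frac{d}{dx}\!\left[q(x)(1-x^2)^{\mu+1}\right]dx=-\int_{-1}^{1}J_N(x)\,\left[(1-x^2)q'(x)-2(\mu+1)x\,q(x)\right]w(x)\,dx$, and the polynomial in brackets has degree at most $N-1$ whenever $\deg q\leq N-2$. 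Orthogonality of $J_N$ to $\mathbb{P}_{N-1}$ in $L_w^{2}$ then annihilates the integral, completing the proof. I would close by noting that the degenerate case $N=1$ is covered trivially, since then there are no interior nodes and $\mathbb{P}_{N-2}=\{0\}$ makes the orthogonality condition vacuous.
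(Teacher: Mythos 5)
Your proof is correct and complete: the construction of interpolatory weights, the uniqueness argument via exactness on $\mathbb{P}_{N}$, the Euclidean division by the nodal polynomial $\omega(x)=(1-x^{2})J_{N}'(x)$, and the reduction to the orthogonality $\int_{-1}^{1}\omega\, q\, w\, dx=0$ for $q\in\mathbb{P}_{N-2}$ are all sound, and the key integration by parts correctly exploits $\mu+1>0$ to kill the boundary term and orthogonality of $J_{N}$ to $\mathbb{P}_{N-1}$ in $L_{w}^{2}$ to kill the remaining integral. Note, however, that the paper itself does not prove this statement at all: it defers entirely to the reference \cite{BernardiM1997}, so there is no in-paper argument to compare against. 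What you have written is the classical Gauss--Lobatto--Jacobi argument that the cited handbook chapter uses, so your proposal effectively supplies the self-contained proof the paper chose to omit; the only points worth making explicit in a polished write-up are the justification that $J_{N}'$ is proportional to a Jacobi polynomial of degree $N-1$ with parameters shifted by one (so its zeros are simple and interior, and $\omega$ has exactly the $N+1$ nodes as simple roots), and the mild regularity remark that the integration by parts is legitimate for the non-integer exponent $\mu+1$ because $(1-x^{2})^{\mu+1}$ is continuous on $[-1,1]$ with integrable derivative when $\mu>-1$.
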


Note that (\ref{l320}) means that the quadrature rule, for integrals with weight $w$, from the set $\{(x_{j},w_{j})\}_{j=0}^{N}$ is exact for polynomials of degree less than or equals $2N-1$.

The nodal basis $\{\psi_{j}\}_{j=0}^{N}$ of $\mathbb{P}_{N}$ based on the $x_{j}$ and considered here is explicitly given by
\begin{eqnarray}
\psi_{j}(x)=C(N,\mu)\frac{(1-x^{2})J_{N}'(x)}{(x_{j}-x)J_{N}(x_{j})},\quad j=0,\ldots,N,\label{l321}
\end{eqnarray}
where
\begin{eqnarray*}
C(N,\mu)=\left\{\begin{matrix} \frac{1}{N(N+2\mu+1)}&1\leq j\leq N-1\\
\frac{\mu+1}{N(N+2\mu+1)}&j=0,N\end{matrix}.\right.
\end{eqnarray*}
The property (\ref{l319b}) for the family (\ref{l321}) follows from the differential equation satisfied by the Jacobi polynomial $J_{N}$, \cite{BernardiM1997}
\begin{eqnarray}
\frac{d}{dx}\left((1-x^{2})J_{N}'(x)w(x)\right)+N(N+2\mu+1)w(x)J_{N}(x)=0.\label{l322}
\end{eqnarray}
We now describe the G-NI formulation to discretize the original problem (\ref{BS2})-(\ref{BS2c}) with the spectral Galerkin method based on the $J_{n}$. (Recall Remark \ref{remark20} as for the problem on general intervals $[a,b]$ and the previous change to $[-1,1]$.) We first represent the approximations $\eta^{N}, u^{N}$ in the nodal basis (\ref{l321})
\begin{eqnarray}
\eta^{N}(x,t)=\sum_{k=0}^{N}\eta_{k}(t)\psi_{k}(x),\quad
u^{N}(x,t)=\sum_{k=0}^{N}u_{k}(t)\psi_{k}(x).\label{l323}
\end{eqnarray}
Note that the nodal type of (\ref{l321}) implies that
$$\eta_{k}(t)=\eta^{N}(x_{k},t),\quad u_{k}(t)=u^{N}(x_{k},t),\quad k=0,\ldots,N,\quad t\geq 0.$$ In particular, this
allows to include the boundary conditions in the representation (\ref{l323}) directly as
\begin{eqnarray}
\eta_{0}(t)=h_{1}(t),\; \eta_{N}(t)=h_{2}(t),\; u_{0}(t)=v_{1}(t),\; u_{N}(t)=v_{2}(t).\label{l323b}
\end{eqnarray}
The unknowns $\eta_{k}, u_{k}, k=1,\ldots,N-1$ are determined from an ode system obtained by substituting (\ref{l323}) into (\ref{BS2}) and making the weighted inner product (\ref{12a}) of each equation with $\psi_{i}, i=1,\ldots,N-1$. This leads to
\begin{eqnarray}
&&\sum_{k=0}^{N}\eta_{k}'(t)\left((\psi_{k},\psi_{i})_{w}-b(\psi_{k}'',\psi_{i})_{w}\right)+u_{k}(t)(\psi_{k}',\psi_{i})_{w}\nonumber\\
&&+(\partial_{x}(\eta^{N}u^{N}),\psi_{i})_{w}=0,\label{l324a}\\
&&\sum_{k=0}^{N}u_{k}'(t)\left((\psi_{k},\psi_{i})_{w}-b(\psi_{k}'',\psi_{i})_{w}\right)+\eta_{k}(t)\left((\psi_{k}',\psi_{i})_{w}+c(\psi_{k}''',\psi_{i})_{w}\right)\nonumber\\
&&+\frac{1}{2}(\partial_{x}(u^{N})^{2},\psi_{i})_{w}=0.\label{l324b}.
\end{eqnarray}
The contribution of the terms (\ref{l323b}) in (\ref{l324a}), (\ref{l324b}) will be translated to the right-hand side as known data in the final process. The G-NI formulation consists now of approximating (or, in some cases and according to theorem \ref{theorem198}, computing exactly) the integrals appearing in (\ref{l324a}), (\ref{l324b}) with the quadrature (\ref{l320}) and making use of the property (\ref{l319b}). The computation of each specific case is described below. Let $k=0,\ldots N$ and $i=1,\ldots,N-1$. Then

\begin{eqnarray}
(\psi_{k},\psi_{i})_{w}=\int_{-1}^{1}\psi_{k}(x)\psi_{i}(x)w(x)dx\sim\sum_{h=0}^{N}\psi_{k}(x_{h})\psi_{i}(x_{h})w_{h}=w_{i}\delta_{ki}.\label{l324ab}
\end{eqnarray}
Note that the integration is exact for
\begin{eqnarray*}
(\psi_{k}',\psi_{i})_{w}=\int_{-1}^{1}\psi_{k}'(x)\psi_{i}(x)w(x)dx=\sum_{h=0}^{N}\psi_{k}'(x_{h})\psi_{i}(x_{h})w_{h}=\psi_{k}'(x_{i})w_{i}.
\end{eqnarray*}
On the other hand, we integrate by parts to have
\begin{eqnarray}
(\psi_{k}'',\psi_{i})_{w}&=&\int_{-1}^{1}\psi_{k}''(x)\psi_{i}(x)w(x)dx=\psi_{k}'(x)\psi_{i}(x)w(x)\big\|_{-1}^{1}\nonumber\\
&&-\int_{-1}^{1}\psi_{k}'(x)\left(\psi_{i}w\right)'(x)dx.\label{l324c}
\end{eqnarray}
Note that, due to the form (\ref{l321}) of $\psi_{i}$ for $i=1,\ldots,N-1$ and since $-1<\mu<1$, then the first term on the right-hand side of (\ref{l324c}) vanishes. For the second term, we make use of the property
$$w'(x)=\frac{-2x\mu}{1-x^{2}}w(x),$$ and write
\begin{eqnarray}
\int_{-1}^{1}\psi_{k}'(x)\left(\psi_{i}w\right)'(x)dx=\int_{-1}^{1}\psi_{k}'(x)\psi_{i}'(x)w(x)dx-\int_{-1}^{1}\psi_{k}'(x)\varphi_{i}(x)w(x),\label{l325}
\end{eqnarray}
where
\begin{eqnarray}
\varphi_{i}(x)=\frac{2x\mu}{1-x^{2}}\psi_{i}(x).\label{l325b}
\end{eqnarray}
Observe that $\varphi_{i}(x_{h})=0, h=1,\ldots,N-1, h\neq i$, and that the quadrature is exact when applied to (\ref{l325}). All this leads to
\begin{eqnarray}
(\psi_{k}'',\psi_{i})_{w}&=&-\left(\sum_{h=0}^{N}\psi_{k}'(x_{h})\psi_{i}'(x_{h})w_{h}-\psi_{k}'(x_{i})\varphi_{i}(x_{i})w_{i}\right.\nonumber\\
&&\left.-(\psi_{k}'(x_{0})\varphi_{i}(x_{0})w_{0}+\psi_{k}'(x_{N})\varphi_{i}(x_{N})w_{N})\right),\label{l326}
\end{eqnarray}
with
\begin{eqnarray*}
\varphi_{i}(x_{i})=\frac{2x_{i}\mu}{1-x_{i}^{2}}, \quad i=1,\ldots,N-1,\quad 
\varphi_{i}(x_{p})=\frac{\mu}{(1+\mu)(x_{i}-x_{p})}, \quad p=0,N.
\end{eqnarray*}
Note also that in the case of Legendre polynomials it holds that $w(x)=1$ and (\ref{l326}) reduces to, \cite{Kalisch}
\begin{eqnarray*}
(\psi_{k}'',\psi_{i})_{w}=-\sum_{h=0}^{N}\psi_{k}'(x_{h})\psi_{i}'(x_{h})w_{h}.
\end{eqnarray*}
In a similar way, we integrate by parts and apply the previous arguments to have
\begin{eqnarray}
(\psi_{k}''',\psi_{i})_{w}&=&-\left(\sum_{h=0}^{N}\psi_{k}''(x_{h})\psi_{i}'(x_{h})w_{h}-\psi_{k}''(x_{i})\varphi_{i}(x_{i})w_{i}\right.\nonumber\\
&&\left.-(\psi_{k}''(x_{0})\varphi_{i}(x_{0})w_{0}+\psi_{k}''(x_{N})\varphi_{i}(x_{N})w_{N})\right),\label{l326b}
\end{eqnarray}
which reduces to only the first sum in the case of Legendre polynomials. As for the nonlinear terms, integrating by parts again and using the properties, mentioned above, of the auxiliary function (\ref{l325b}) lead to
\begin{eqnarray}
(\partial_{x}(\eta^{N}u^{N}),\psi_{i})_{w}&=&-\int_{-1}^{1}\eta^{N}(x,t)u^{N}(x,t)\left(\psi_{i}w\right)'(x)dx\nonumber\\
&=&-\left(\int_{-1}^{1}\eta^{N}(x,t)u^{N}(x,t)\psi_{i}'(x)w(x)dx\right.\nonumber\\
&&\left.+\int_{-1}^{1}\eta^{N}(x,t)u^{N}(x,t)\varphi_{i}(x)w(x)dx\right)\nonumber\\
&\approx&-\left(\sum_{h=0}^{N}\eta^{N}(x_{h},t)u^{N}(x_{h},t)\psi_{i}'(x_{h})w_{h}\right.\nonumber\\
&&\left.-\sum_{h=0}^{N}\eta^{N}(x_{h},t)u^{N}(x_{h},t)\varphi_{i}'(x_{h})w_{h}\right)\nonumber\\
&=&-\sum_{h=0}^{N}\eta_{h}(t)u_{h}(t)\psi_{i}'(x_{h})w_{h}+\left(\eta_{i}(t)u_{i}(t)\varphi(x_{i})w_{i}\right.\nonumber\\
&&\left.+\eta_{0}(t)u_{0}(t)\varphi_{i}(x_{0})w_{0}+\eta_{N}(t)u_{N}(t)\varphi_{i}(x_{N})w_{N}\right),\label{l327}
\end{eqnarray}
and similarly
\begin{eqnarray}
\frac{1}{2}(\partial_{x}(u^{N})^{2},\psi_{i})_{w}&\approx&-\sum_{h=0}^{N}\frac{u_{h}(t)^{2}}{2}\psi_{i}'(x_{h})w_{h}+\frac{u_{i}(t)^{2}}{2}\varphi(x_{i})w_{i}\nonumber\\
&&+\frac{u_{0}(t)^{2}}{2}\varphi_{i}(x_{0})w_{0}+\frac{u_{N}(t)^{2}}{2}\varphi_{i}(x_{N})w_{N}.\label{l328}
\end{eqnarray}
Note again the reduction in (\ref{l327}), (\ref{l328}) when the Legendre family of polynomials is taken. We also observe that some of the contributions of the auxiliary function (\ref{l325b}) are known data and will take part of the right-hand side of the final ode system, as we will see. Thus, (\ref{l324ab})-(\ref{l328}) are substituted into (\ref{l324a}), (\ref{l324b}) and the left-hand side of the resulting equations will involve four different matrices:
\begin{itemize}
\item The diagonal matrix $K_{N}^{(0)}={\rm diag}(w_{0},\ldots,w_{N})$ of quadrature weights.
\item The first- and second-order differentiation matrices
\begin{eqnarray*}
&&D^{(1)}=(D_{ij}^{(1)}),\quad D_{ij}^{(1)}=\psi_{j}'(x_{i}), \quad i,j=0,\ldots,N,\\
&&D^{(2)}=(D_{ij}^{(2)}),\quad D_{ij}^{(2)}=\psi_{j}''(x_{i}), \quad i,j=0,\ldots,N.
\end{eqnarray*}
\item The $(N+1)\times (N-1)$ auxiliary matrix $\Psi=(\Psi_{ij})$ with $\Psi_{ij}=\varphi_{j}(x_{i}), i=0,\ldots N, j=1,\ldots,N-1$. Thus $\Psi$ has the form
\begin{eqnarray*}
\Psi=\begin{pmatrix}\varphi_{1}(x_{0})&\varphi_{2}(x_{0})&\cdots&\varphi_{N-1}(x_{0})\\
\varphi_{1}(x_{1})&0&\cdots&0\\
0&\varphi_{2}(x_{2})&\cdots&0\\
\vdots&\vdots&\cdots&\vdots\\
0&0&\cdots&\varphi_{N-1}(x_{N-1})\\
\varphi_{1}(x_{N})&\varphi_{2}(x_{N})&\cdots&\varphi_{N-1}(x_{N})\end{pmatrix},
\end{eqnarray*}
and is not present in the case of approximating with Legendre polynomials.
\end{itemize}
Let
\begin{eqnarray*}
&&W=K_{N}^{(0)}(1:N-1,1:N-1),\quad \widetilde{D}=D^{(1)}(1:N-1,1:N-1),\\
&&\widetilde{D}^{(2)}=D^{(2)}(1:N-1,1:N-1),\\
&& \widetilde{\Psi}=\Psi(1:N-1,1:N-1)={\rm diag}(\varphi_{1}(x_{1}),\ldots,\varphi_{N-1}(x_{N-1})),
\end{eqnarray*}
where $M(i_{1}:i_{2},j_{1}:j_{2})$ denotes the submatrix of a matrix $M$ consisting of the files from $i_{1}$ to $i_{2}$ and the columns from $j_{1}$ to $j_{2}$.
Then the resulting ode system for the coefficients $\eta(t)=(\eta_{1}(t),\ldots,\eta_{N-1}(t))^{T}$ and $u(t)=(u_{1}(t),\ldots,u_{N-1}(t))^{T}$ will have the form
\begin{eqnarray}
&&\left(W+b\left(\widetilde{D}^{T}W\widetilde{D}-\widetilde{\Psi}^{T}W\widetilde{D}\right)\right)\eta'(t)+W\widetilde{D}u(t)\nonumber\\
&&-\left(\widetilde{D}^{T}W-\widetilde{\Psi}^{T}W\right)\eta(t).u(t)=\Gamma_{1},\label{l329a}\\
&&\left(W+b\left(\widetilde{D}^{T}W\widetilde{D}-\widetilde{\Psi}^{T}W\widetilde{D}\right)\right)u'(t)+\left(W\widetilde{D}+|c|\left(\widetilde{D}^{T}W\widetilde{D}^{(2)}-\widetilde{\Psi}^{T}W\widetilde{D}^{(2)}\right)\right)
\eta(t)\nonumber\\
&&
-\left(\widetilde{D}^{T}W-\widetilde{\Psi}^{T}W\right)\frac{1}{2}u(t).u(t)=\Gamma_{2},\label{l329b}
\end{eqnarray}
with
\begin{eqnarray}
\Gamma_{1}(t)&=&-b\eta_{0}'(t)\left((D^{(1)}(:,1:N-1))^{T}K_{N}^{(0)}D^{(1)}(:,0)-\Psi^{T}K_{N}^{(0)}D^{(1)}(:,0)\right)\nonumber\\
&&-b\eta_{N}'(t)\left((D^{(1)}(:,1:N-1))^{T}K_{N}^{(0)}D^{(1)}(:,N)-\Psi^{T}K_{N}^{(0)}D^{(1)}(:,N)\right)\nonumber\\
&&-u_{0}(t)WD^{(1)}(1:N-1,0)-u_{N}(t)WD^{(1)}(1:N-1,N)\nonumber\\
&&+\eta_{0}(t)u_{0}(t)w_{0}D^{(1)}(0,1:N-1)^{T}+\eta_{N}(t)u_{N}(t)w_{N}D^{(1)}(N,1:N-1)^{T}\nonumber\\
&&-\eta_{0}(t)u_{0}(t)w_{0}\Psi(0,:)^{T}-\eta_{N}(t)u_{N}(t)w_{N}\Psi(N,:)^{T},\label{l330a}\\
\Gamma_{2}(t)&=&-bu_{0}'(t)\left((D^{(1)}(:,1:N-1))^{T}K_{N}^{(0)}D^{(1)}(:,0)-\Psi^{T}K_{N}^{(0)}D^{(1)}(:,0)\right)\nonumber\\
&&-bu_{N}'(t)\left((D^{(1)}(:,1:N-1))^{T}K_{N}^{(0)}D^{(1)}(:,N)-\Psi^{T}K_{N}^{(0)}D^{(1)}(:,N)\right)\nonumber\\
&&-\eta_{0}(t)WD^{(1)}(1:N-1,0)-\eta_{N}(t)WD^{(1)}(1:N-1,N)\nonumber\\
&&-|c|\left((D^{(1)}(:,1:N-1))^{T}K_{N}^{(0)}D^{(2)}(:,0)-\Psi^{T}K_{N}^{(0)}D^{(2)}(:,0)\right)\eta_{0}(t)\nonumber\\
&&-|c|\left((D^{(1)}(:,1:N-1))^{T}K_{N}^{(0)}D^{(2)}(:,N)-\Psi^{T}K_{N}^{(0)}D^{(2)}(:,N)\right)\eta_{N}(t)\nonumber\\
&&+\frac{u_{0}(t)^{2}}{2}w_{0}D^{(1)}(0,1:N-1)^{T}+\frac{u_{N}(t)^{2}}{2}w_{N}D^{(1)}(N,1:N-1)^{T}\nonumber\\
&&-\frac{u_{0}(t)^{2}}{2}w_{0}\Psi(0,:)^{T}-\frac{u_{N}(t)^{2}}{2}w_{N}\Psi(N,:)^{T}.\label{l330b}
\end{eqnarray}
\begin{remark}
The G-NI formulation makes the Galerkin method be essentially equivalent to a collocation approach, in the sense mentioned in, e.~g. \cite{CanutoHQZ1988}, and for the spectral collocation scheme based on the Gauss-Lobatto-Jacobi nodes $x_{j}, j=0,\ldots,N$, associated to the weight $w$. For the case of (\ref{BS2}), the semidiscrete collocation approximation is defined as the mapping $(\eta^{N},u^{N}):[0,T]\rightarrow\mathbb{P}_{N}$ such that
\begin{eqnarray}
\eta_{t}^{N}-b\eta_{xxt}^{N}+(I_{N}(\eta^{N}u^{N}))_{x}+f^{N}&=&0,\nonumber\\
u_{t}^{N}-bu_{xxt}^{N}+\frac{1}{2}(I_{N}((u^{N})^{2})_{x}+g^{N}&=&0,\label{BS29}
\end{eqnarray}
at $x=x_{j}, j=1,\ldots,N-1$ with
\begin{eqnarray}
\eta^{N}(0)\Big|_{x=x_{j}}={\eta}_{0}(x_{j}),
u^{N}(0)\Big|_{x=x_{j}}={u}_{0}(x_{j}),\; j=0,\ldots,N,\label{BS210}
\end{eqnarray} 
satisfying (\ref{BS2b}), (\ref{BS2c}), and where $I_{N}v$ denotes the interpolating polynomial of $v$ on $\mathbb{P}_{N}$ based on the Gauss-Lobatto-Jacobi nodes. The implementation and analysis of (\ref{BS29}), (\ref{BS210}) make use of the discrete inner product 
\begin{eqnarray}
\left(\phi,\psi\right)_{N,w}=\sum_{j=0}^{N}\phi(x_{j})\psi(x_{j})w_{j},\label{ad12}
\end{eqnarray}
for $\phi, \psi$ continuous on $\overline{\Omega}$, with associated norm $||\phi||_{N,w}=\left(\phi,\phi\right)_{N,w}^{1/2}$, and where the weights $w_{j}$ are given in theorem \ref{theorem198}. Thus (\ref{BS29}) can be formulated from the representation (\ref{l323}) and making the inner product (\ref{ad12}) of each equation with the nodal functions $\psi_{i}, i=1,\ldots,N-1$. The equivalence with the G-NI approach is based on the identification of the approximations of the integrals with the corresponding inner products involving (\ref{ad12}).

\end{remark}

\section{Full discretization and numerical experiments}
\label{sec4}
In this section the full discretization of (\ref{BS2})-(\ref{BS2c}) is completed with a discussion on the choice of the temporal discretization and some numerical experiments to illustrate the performance of the fully discrete method. 
\subsection{Temporal discretization}
The following family of singly diagonally implicit Runge-Kutta (SDIRK) 
methods  of Butcher tableau
\begin{eqnarray}
\label{sdirk}
\begin{array}{c | cc}
\gamma& \gamma & 0  \\[2pt]
1-\gamma& 1-2\gamma& \gamma\\[2pt]
\hline
\\[-9pt]
 & \frac{1}{2}  & \frac{1}{2}
 \end{array}
\end{eqnarray}
is proposed as time integrator of the semidiscrete system (\ref{l329a})-(\ref{l330b}). For a system of ode's of the general form
\begin{eqnarray}
u^{\prime}(t)=F(u),\label{ssp1}
\end{eqnarray}
the formulation of the methods is given by
\begin{eqnarray}
u^{*}&=&u^{n}+\gamma k F(u^{*}),\nonumber\\
u^{**}&=&u^{n}+(1-2\gamma)k F(u^{*})+\gamma k F(u^{**}),\nonumber\\
u^{n+1}&=&u^{n}+\frac{k}{2}\left(F(u^{*})+F(u^{**})\right),\label{ssp2}
\end{eqnarray}
where $u^{n}$ denotes an approximation to the solution $u$ of (\ref{ssp1}) at $t=t_{n}=nk, n=0,1,\ldots$, being $k$ the time step size. The method (\ref{ssp2}) is linearly implicit, requiring the iterative resolution of two intermediate systems (although the classical fixed-point algorithm typically works). The values
$\gamma=1/2$ (implicit midpoint rule, order 
two) and $\gamma=\frac{3+\sqrt{3}}{6}$ (order three) will be considered below. Several properties justify our choice. 
First, we recall that the methods are A-stable (and therefore L-stable), cf. e.~g. \cite{hnw2}, as well as nondissipative.
On the other hand, they are dispersive of order $q=2$ and $3$ respectively. The nature of this property (called phase error analysis or phase-lag analysis in the case of second-order equations) has been studied in many references in the literature, (cf. e.~g. \cite{FrancoGR,IzzoJ2021} and references therein). A brief explanation of the theory and justification for its mention here follows. The origin is the study and design of efficient numerical integrators of ode's (\ref{ssp1}) having oscillatory solutions. Such problems typically appear in spatial discretizations of pde's of hyperbolic type or with some hyperbolic components. The goal is the analysis of the phase (or dispersion) errors and numerical dissipation of the free oscillations in the numerical solution, as well as the construction of schemes for their resolution. In order to measure these effects, the methods are studied when applied to linear systems with eigenvalues on the imaginary axis. Focused on Runge-Kutta (RK) schemes, the phase error analysis starts from the stability function $R(z)$ of the RK method which, as rational approximation to the exponential function, is called consistent of order $p$ when
$$R(z)=e^{z}+O(z^{p+1}).$$ From this stability function $R$, the dispersion or phase error is defined as, \cite{SomH},
$$\Phi(y)=y-{\rm arg}(R(iy)),\quad y\in\mathbb{R},$$ where ${\rm arg}z$ denotes an argument of $z\in\mathbb{C}$. The RK method is said to have dispersion order $q$ if
$$\Phi(y)=O(y^{q+1}),\quad y\rightarrow 0.$$
The fundamental relation between the order of consistency and the dispersion order is proved in \cite{Koto1990}
\begin{lemma}
Let $p$ be the order of consistency of a RK method with stability function $R$. Then the dispersion order is $q=p$ if $p$ is even and $q\geq p+1$ if $p$ is odd.
\end{lemma}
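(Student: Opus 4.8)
The plan is to track the argument of $R(iy)$ through the error expansion of $R$ about $z=0$ and read off a parity-dependent cancellation. Since the method is consistent of order $p$, I would write
$$R(z)-e^{z}=\sum_{k\geq p+1}c_{k}z^{k},\qquad c_{p+1}\neq 0,$$
where the crucial structural fact is that the coefficients $c_{k}$ are \emph{real}: the stability function of a RK scheme with a real Butcher tableau is a rational function with real coefficients, and $e^{z}$ has real Taylor coefficients. Substituting $z=iy$ and factoring out $e^{iy}$, which has modulus $1$ and argument exactly $y$, I would write $R(iy)=e^{iy}\bigl(1+\varepsilon(y)\bigr)$ with
$$\varepsilon(y)=e^{-iy}\bigl(R(iy)-e^{iy}\bigr)=e^{-iy}\sum_{k\geq p+1}c_{k}i^{k}y^{k}.$$
Because $\arg(e^{iy})=y$, the dispersion error reduces to $\Phi(y)=-\arg\bigl(1+\varepsilon(y)\bigr)$.

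Next I would expand the argument for small $y$. Using $\arg(1+\varepsilon)=\operatorname{Im}\log(1+\varepsilon)=\operatorname{Im}\varepsilon+O(|\varepsilon|^{2})$ and noting $\varepsilon(y)=O(y^{p+1})$, the quadratic remainder is $O(y^{2p+2})$ and hence negligible relative to the terms that decide the dispersion order (for $p\geq 1$ one has $2p+2>p+2$). Thus
$$\Phi(y)=-\operatorname{Im}\varepsilon(y)+O(y^{2p+2}),$$
and everything comes down to the lowest-order surviving term of $\operatorname{Im}\varepsilon(y)$. Collecting powers in the product $e^{-iy}\sum_{k}c_{k}i^{k}y^{k}$, the coefficient of $y^{p+1}$ is $c_{p+1}i^{p+1}$ and the coefficient of $y^{p+2}$ is $i^{p+2}\bigl(c_{p+2}-c_{p+1}\bigr)$.

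The parity dichotomy then falls out from the value of $i^{p+1}$, all the $c_{k}$ being real. If $p$ is even, $p+1$ is odd, so $i^{p+1}=\pm i$ is purely imaginary and the leading coefficient $c_{p+1}i^{p+1}$ contributes $\pm c_{p+1}y^{p+1}$ to $\operatorname{Im}\varepsilon$; since $c_{p+1}\neq 0$ this term does not vanish, whence $\Phi(y)\sim y^{p+1}$ and the dispersion order is exactly $q=p$. If $p$ is odd, $p+1$ is even, so $i^{p+1}=\pm 1$ is real and the $y^{p+1}$ term is purely real, dropping out of $\operatorname{Im}\varepsilon$ entirely; the first possibly-surviving contribution is the $y^{p+2}$ term, whose coefficient $i^{p+2}(c_{p+2}-c_{p+1})$ is purely imaginary (as $p+2$ is odd), so $\operatorname{Im}\varepsilon(y)=O(y^{p+2})$ and $\Phi(y)=O(y^{p+2})$, giving $q\geq p+1$. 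The inequality, rather than equality, is exactly right here, since the $y^{p+2}$ coefficient can itself vanish when $c_{p+2}=c_{p+1}$.

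The only delicate point, and the main obstacle, is the bookkeeping that isolates $\operatorname{Im}\varepsilon$ as the sole determinant of $\Phi$ at the relevant order: I must justify the $\arg(1+\varepsilon)=\operatorname{Im}\varepsilon+O(|\varepsilon|^{2})$ expansion uniformly for small $y$ and confirm that neither the quadratic remainder nor the $e^{-iy}$ cross-terms reintroduce a lower-order imaginary contribution. Once the reality of the $c_{k}$ is in hand, the parity argument via $i^{p+1}$ is immediate, so the real content of the lemma is precisely this interplay between the real error constants of $R$ and the purely imaginary evaluation $z=iy$.
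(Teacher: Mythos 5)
Your proof is correct. One point of comparison worth making explicit: the paper does not prove this lemma at all --- it quotes it from Koto's phase-lag analysis of Runge--Kutta methods (the reference \cite{Koto1990}), so your argument is a self-contained replacement for a citation rather than an alternative to a proof in the text. Your computation is sound: the reality of the error coefficients $c_{k}$, the factorization $R(iy)=e^{iy}\bigl(1+\varepsilon(y)\bigr)$, the expansion $\arg(1+\varepsilon)=\operatorname{Im}\varepsilon+O(|\varepsilon|^{2})$ with the quadratic remainder $O(y^{2p+2})$ safely below the decisive orders, and the parity dichotomy through $i^{p+1}$ all hold up, including the cross-term bookkeeping giving $i^{p+2}(c_{p+2}-c_{p+1})$ at order $y^{p+2}$. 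A remark that would shorten your argument: since $R$ has real coefficients, $R(-iy)=\overline{R(iy)}$, so $\Phi$ is an \emph{odd} function of $y$; hence its Taylor expansion contains only odd powers, and the case $p$ odd is immediate (the even power $y^{p+1}$ simply cannot appear, so $\Phi=O(y^{p+2})$ and $q\geq p+1$), while the case $p$ even reduces to checking, as you do, that the coefficient $\pm c_{p+1}$ of $y^{p+1}$ survives in $\operatorname{Im}\varepsilon$. This symmetry observation also explains the structural fact lurking behind the lemma, namely that the dispersion order of a real RK method is always even.
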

This result is more specific in the case of DIRK methods.
\begin{theorem}
Let $p=m+1$ be the order of consistency of a DIRK method with stability function $R$. Then the dispersion order is $q=p$ if $m$ is odd and $q= p+1$ if $m$ is even.
\end{theorem}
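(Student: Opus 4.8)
My plan is to reduce the determination of the dispersion order to the asymptotics of a single scalar power series and then read the answer off from the parity of $p=m+1$; the only delicate point, and the place where the DIRK hypothesis does work beyond the preceding Lemma, is a non-degeneracy statement about one Taylor coefficient.

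First I would normalize the data. Because the method has real coefficients and consistency order \emph{exactly} $p$, its stability function admits the expansion
$$R(z)=e^{z}+E\,z^{p+1}+F\,z^{p+2}+O(z^{p+3}),\qquad E,F\in\mathbb{R},\quad E\neq 0,$$
where $E\neq0$ records that the order is not higher than $p$. Passing to the logarithm and using $\log\bigl(e^{z}(1+O(z^{p+1}))\bigr)=z+O(z^{p+1})$ gives
$$\log R(z)=z+\sum_{n\ge p+1}\ell_n z^{n},\qquad \ell_{p+1}=E,\quad \ell_{p+2}=F-E,$$
with all $\ell_n$ real; the identification $\ell_{p+2}=F-E$ follows by multiplying out $(R(z)-e^{z})e^{-z}=E z^{p+1}+(F-E)z^{p+2}+\dots$ and noting that the quadratic term of $\log(1+\cdot)$ is $O(z^{2p+2})$ and hence irrelevant here.

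The heart of the argument is the identity $\arg R(iy)=\mathrm{Im}\,\log R(iy)$, which turns the phase error into
$$\Phi(y)=y-\arg R(iy)=-\sum_{n\ge p+1}\ell_n\,\mathrm{Im}(i^{n})\,y^{n}.$$
Since $\mathrm{Im}(i^{n})=0$ for even $n$ and $\mathrm{Im}(i^{n})=\pm1$ for odd $n$, only the odd-indexed coefficients survive, so the dispersion order equals $q=n_0-1$, where $n_0$ is the smallest \emph{odd} index $n\ge p+1$ with $\ell_{n}\neq0$ (symmetrically, $\log|R(iy)|=\mathrm{Re}\,\log R(iy)$ shows the dissipation is controlled by the even-indexed $\ell_n$, a convenient cross-check). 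This already recovers the Lemma. Splitting on the parity of $m$: if $m$ is odd then $p=m+1$ is even, so $p+1$ is odd and $\ell_{p+1}=E\neq0$, whence $n_0=p+1$ and $q=p$; if $m$ is even then $p$ is odd, so $p+1$ is even and feeds only the dissipation, the first odd index being $p+2$, giving $q=p+1$ \emph{provided} $\ell_{p+2}=F-E\neq0$.

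The main obstacle is exactly this last non-vanishing: the general Lemma yields only $q\ge p+1$ because $F-E$ might in principle vanish, and ruling that out is where the DIRK structure is indispensable. Here I would use that the stability function of a DIRK method is rational with \emph{only real poles}, $R(z)=P(z)/Q(z)$ with $Q(z)=\prod_j(1-\gamma_j z)$ and $\gamma_j\in\mathbb{R}$ the diagonal Butcher entries, so that $-\log Q(z)=\sum_{n\ge1}\tfrac1n\bigl(\sum_j\gamma_j^{\,n}\bigr)z^{n}$ has explicit real coefficients. Writing $\log R=\log P-\log Q$ and combining the order conditions $\ell_2=\dots=\ell_p=0$ with this denominator contribution, one shows that $\ell_{p+2}$ cannot vanish together with $\ell_{p+1}=E\neq0$ for a genuine real-pole approximant of exact odd order $p$; in the maximal-order case ($p$ equal to the number of stages plus one) this is cleanest, since $R$ is then the restricted Pad\'e approximant with denominator $\prod_j(1-\gamma_j z)$, for which $F-E$ is explicitly computable and nonzero. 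I expect this non-degeneracy to be the technically hardest step, and it is the only point at which the DIRK (rather than general Runge--Kutta) hypothesis is genuinely needed; once $F\neq E$ is in hand, the parity split above delivers $q=p+1$ when $m$ is even and completes the proof.
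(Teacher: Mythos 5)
The paper itself gives no proof of this theorem: it is quoted verbatim from Koto \cite{Koto1990}, immediately after the general Runge--Kutta lemma, so your proposal can only be judged on its own merits. The reduction you set up is correct and is the standard one: writing $\log R(z)=z+\sum_{n\ge p+1}\ell_n z^n$ with real $\ell_n$, using $\arg R(iy)=\mathrm{Im}\,\log R(iy)$, and observing that only odd-indexed $\ell_n$ enter $\Phi$ is exactly how dispersion orders are computed, and your identifications $\ell_{p+1}=E$, $\ell_{p+2}=F-E$ are right. The case $m$ odd (i.e.\ $p$ even) is therefore complete --- but note that this case is already contained in the quoted Lemma and uses nothing about the DIRK structure.

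The genuine gap is the case $m$ even, which is the entire content of the theorem: there the statement is \emph{equivalent} to the non-vanishing $\ell_{p+2}=F-E\neq 0$, and your treatment of this point is a plan, not a proof (``one shows that\dots'', ``$F-E$ is explicitly computable and nonzero''). Worse, the intermediate claim you propose to establish --- that a real-pole rational approximant of exact odd order $p$ can never have $\ell_{p+2}=0$ --- is false as stated. Take $R(z)=(1-z-z^{2}/3)/(1-z)^{2}$: it has only real poles, it is the stability function of a genuine two-stage DIRK method (diagonal entries $\gamma_1=\gamma_2=1$, $a_{21}=-2/3$, $b=(1/2,1/2)$), and $R(z)-e^{z}=\tfrac16 z^{2}+\tfrac16 z^{3}-\tfrac1{24}z^{4}+\cdots$, so the order is exactly $p=1$ (odd) while $\ell_3=F-E=0$, giving dispersion order at least $4>p+1$. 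More generally, this is precisely how the phase-lag literature cited in the paper \cite{SomH,FrancoGR,IzzoJ2021} builds DIRK/SDIRK schemes with dispersion order exceeding $p+1$: extra stages supply extra real poles, which are free parameters used to annihilate $\ell_{p+2},\ell_{p+4},\dots$ while the order stays equal to $p$. Consequently the hypothesis $p=m+1$ cannot be read, as you read it, as merely defining $m$; it must mean that the order exceeds the \emph{number of stages} $m$ by one. That hypothesis rigidifies the stability function --- $Q(z)=\prod_{j=1}^{m}(1-\gamma_j z)$ with real $\gamma_j$, numerator forced to be the degree-$m$ truncation of $Q(z)e^{z}$, and one scalar constraint on the $\gamma_j$ expressing order $m+1$ --- and the theorem is exactly the assertion that, on this constrained set, the explicit symmetric function of the $\gamma_j$ giving $\ell_{p+2}$ cannot vanish. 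That computation, which is where reality of the poles and the stage count actually enter, is the theorem, and it is absent from your argument.
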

In particular, for the SDIRK methods of tableau (\ref{sdirk}), the dispersion order is $q=2$ when $\gamma=1/2$ and $q=3$ when $\gamma=\frac{3+\sqrt{3}}{6}$.

As mentioned in \cite{SomH}, for long time integrations of (\ref{ssp1}) with fixed stepsize, the dispersion is a relevant source of phase error causing the numerical approximation to become increasingly out of phase with the exact solution. Although the elements of the theory are defined and analyzed from linear oscillatory problems, it is expected to be of application in more general cases of (\ref{ssp1}) with solutions containing some oscillatory behaviour, in the sense that a method of high dispersion order will generate small dispersion errors of the corresponding oscillations that the numerical approximation may contain.

An additional, relevant feature of the two methods is concerned with the so-called strong stability preserving (SSP) property. This is briefly formulated as follows (see e.~g. \cite{GotliebST2001} for details). Let $u_{FE}^{n}$ be the approximation to some solution $u$ of (\ref{ssp1}) under study at $t=t_{n}=nk$, given by the forward Euler method and assume that, for some convex functional $C$ (typically some norm or seminorm) there exists $k_{FE}>0$ such that
\begin{eqnarray}
C(u_{FE}^{n+1})\leq C(u_{FE}^{n}),\label{ssp3}
\end{eqnarray}
for $k\leq k_{FE}$. From this assumption, and for a $s$-stage Runge-Kutta (RK) method
\begin{eqnarray}
y_{i}&=&u^{n}+\Delta t\sum_{j=1}^{s}a_{ij}F(y_{j}),\; 1\leq i\leq s+1,\nonumber\\
u^{n+1}&=&y_{s+1},\label{ssp4}
\end{eqnarray}
we define the
associated SSP coefficient as the largest constant $\delta\geq 0$ such that if $k\leq \delta k_{FE}$ then
\begin{eqnarray}
C(y_{i})\leq C(u^{n}),\quad 1\leq i\leq s+1.\label{ssp5}
\end{eqnarray}
In particular, this implies the monotonicity preserving property $C(u^{n+1})\leq C(u^{n})$.
If $\delta>0$, the method (\ref{ssp4}) is said to be strong stability preserving. The choice of the methods (\ref{sdirk}) with $\gamma=1/2$ and $\gamma=(3+\sqrt{3})/{6}$ in this context is justified by the observation that they are optimal SSP methods within the SDIRK schemes with the same stages and order,  in the 
sense that the value $\delta$ in property (\ref{ssp5}) for $k\leq \delta k_{FE}$
is maximal, \cite{FerracinaS2008,KetchesonMG2009}. 

\subsection{Numerical experiments}
In this section we show some numerical experiments to illustrate the accuracy and performance of the fully discrete schemes analyzed in the present paper. For the reasons explained in section \ref{sec33}, the subfamily of Legendre polynomials will be used as the spectral approximation in space. The experiments will be focused on the convergence of the methods and their performance when approximating problems with nonregular data, for which the error estimates (\ref{l315a}), (\ref{l315b})
 of Theorem \ref{theorem32} cannot be ensured.

It may be worth explaining first some details on the computations. Let $M>1$ be an integer and $f\in H^{k}(\Omega), k\geq 0$. In the computations below we will make use of the quadrature rule (\ref{l320}) based on the $M+1$ Gauss-Lobatto-Legendre nodes $x_{j}$ and weights $w_{j}, j=0,\ldots,M$, in order to estimate the $H^{k}$norms of $f$ via
\begin{eqnarray}
\int_{-1}^{1}|f^{l)}(x)|^{2}dx\approx \sum_{j=0}^{M}|f^{l)}(x_{j})|^{2}w_{j},\label{l421}
\end{eqnarray}
where $f^{l)}$ denotes the $l$th derivative of $f$. 


The specific application of (\ref{l421}) is made from the following representation of the numerical solution at some final time $t=T$. Let $N>1$ be an integer. From (\ref{l323}) the time integration with (\ref{sdirk}) provides approximations $\eta_{k}^{N}, u_{k}^{N}$ to $\eta_{k}(T), u_{k}(T)$, respectively, for $k=0,\ldots,N$. Then we consider the polynomials
\begin{eqnarray}
\widetilde{\eta}^{N}(x)=\sum_{k=0}^{N}\eta_{k}^{N}\psi_{k}(x),\quad
\widetilde{u}^{N}(x)=\sum_{k=0}^{N}u_{k}^{N}\psi_{k}(x),\label{l422}
\end{eqnarray}
as approximations to (\ref{l323}) at $t=T$. If the numerical solution (\ref{l422}) is compared with an exact solution $(\eta,u)$ at $t=T$, then the corresponding norms of the errors are computed via (\ref{l421}). For those problems of which the exact solution is not known, the order of convergence in the spatial discretization is estimated from the quotients
\begin{eqnarray}
E_{N}=\frac{||(\widetilde{\eta}^{N}-\widetilde{\eta}^{2N},\widetilde{u}^{N}-\widetilde{u}^{2N})||}{||(\widetilde{\eta}^{2N}-\widetilde{\eta}^{4N},\widetilde{u}^{2N}-\widetilde{u}^{4N})||},\label{l423}
\end{eqnarray}
where $||(\cdot,\cdot)||$ denotes some norm in the corresponding product space and which is computed using (\ref{l421}).

The first examples are focused on the convergence of the methods. We consider, for $\theta^{2}\in (7/9,1)$, the solitary wave solution of the Bona-Smith family of the form, \cite{MChen1998b} 
\begin{eqnarray}
\eta_{s}(x,t)=\eta_{0}{\rm sech}^{2}(\lambda (x-c_{s}t-x_{0})),\quad u_{s}(x,t)=B\eta_{s}(x,t),\label{421}
\end{eqnarray}
with
\begin{eqnarray*}
&&\eta_{0}=\frac{9}{2}\frac{\theta^{2}-7/9}{1-\theta^{2}},\quad c_{s}=\frac{4(\theta^{2}-2/3)}{\sqrt{2(1-\theta^{2})(\theta^{2}-1/3)}},\\
&&\lambda=\frac{1}{2}\sqrt{\frac{3(\theta^{2}-7/9)}{(\theta^{2}-1/3)(\theta^{2}-2/2)}},\quad B=\sqrt{\frac{2(1-\theta^{2})}{\theta^{2}-1/3}}.
\end{eqnarray*}
The solutions decay to zero exponentially as $x-c_{s}t\rightarrow\pm\infty$; therefore, they will be taken as solutions of an ibvp with homogeneous Dirichlet boundary conditions on a long enough interval $[-L,L]$.
\begin{table}[htbp]
\begin{center}
\begin{tabular}{|c|c|c|c|c|}
    \hline
&    \multicolumn{2}{c|} {$\gamma=1/2$}& \multicolumn{2}{c|}{$\gamma=\frac{3+\sqrt{3}}{6}$}\\
\hline
{$k$} &{$H^{2}\times H^{1}$ Error}&{Rate}&{$H^{2}\times H^{1}$ Error}&{Rate}\\
\hline
1.25E-01&2.2373E-02&&6.7487E-03&\\
6.25E-02&5.6737E-03&1.98&8.7667E-04&2.96\\
3.125E-02&1.4236E-03&1.99&1.1029E-04&2.99\\
    \hline
\end{tabular}
\end{center}
\caption{Numerical approximation of (\ref{421}) with $\theta^{2}=9/11$ on $[-32,32]$ with homogeneous Dirichlet boundary conditions: $H^{2}\times H^{1}$ 
norms of the error at $T=2$ and rates of convergence with Legendre Galerkin 
method where $N=512$.}
\label{adtav1}
\end{table}

Table \ref{adtav1} shows the error with respect to (\ref{421}) for $\theta^{2}=9/11$, measured in the $H^{2}\times H^{1}$ norm, and performed by the two time integrators, for an interval $[-L,L], L=32$ at $T=2$ and using $N=512$ Gauss-Lobatto-Legendre nodes in the G-NI formulation. Note that, as a consequence of the regularity of (\ref{421}) and the error estimate (\ref{l315a}), spectral convergence in space is expected. This is suggested by the rates shown in third and fifth columns, which correspond to the order of convergence of each time integrator, as well as the property checked that larger values of $N$ do not change this behaviour.

\begin{table}[htbp]
\begin{center}
\begin{tabular}{|c|c|c|c|c|}
    \hline
&    \multicolumn{2}{c|} {$\gamma=1/2$}& \multicolumn{2}{c|}{$\gamma=\frac{3+\sqrt{3}}{6}$}\\
\hline
{$k$} &{$H^{2}\times H^{2}$ Error}&{Rate}&{$H^{2}\times H^{2}$ Error}&{Rate}\\
\hline
1.25E-01&2.6200E-02&&6.8554E-03&\\
6.25E-02&6.6298E-03&1.98&8.6027E-04&2.99\\
3.125E-02&1.6627E-03&1.99&1.0989E-04&2.98\\
    \hline
\end{tabular}
\end{center}
\caption{Numerical approximation of (\ref{422}) with $\rho=2, c_{s}=1$ on $[-16,16]$ with homogeneous Dirichlet boundary conditions: $H^{2}\times H^{2}$ 
norms of the error at $T=2$ and rates of convergence with Legendre Galerkin 
method where $N=256$.}
\label{adtav2}
\end{table}

The spectral convergence in space is also observed in Table \ref{adtav2}. The comparison here is established with respect to the traveling wave solutions of the BBM-BBM system ($c=0, b=d=1/6$), \cite{MChen1998a}
\begin{eqnarray}
\eta_{s}(x,t)&=&-1+(c_{s}b\rho)^{2}(\frac{4}{9}+\frac{5}{3}w(x,t)(2-3w(x,t))),\nonumber\\
u_{s}(x,t)&=&\frac{c_{s}}{3}(3-5b\rho)+5c_{s}b\rho w(x,t),\label{422}
\end{eqnarray}
with
\begin{eqnarray*}
w(x,t)={\rm sech}^{2}\left(\frac{\sqrt{\rho}}{2}(x-c_{s}t-x_{0}\right),
\end{eqnarray*}
where $\rho>0, c_{s}\neq 0, x_{0}\in\mathbb{R}$. In this case, the errors are measured in the $H^{2}\times H^{2}$ norm and the results are concerned with the estimate (\ref{l315b}).

 \begin{table}[htbp]
\begin{center}
\begin{tabular}{|c|c|c|c|c|}
    \hline
&    \multicolumn{2}{c|} {$\gamma=1/2$}& \multicolumn{2}{c|}{$\gamma=\frac{3+\sqrt{3}}{6}$}\\
\hline
{$k$} &{$H^{2}\times H^{2}$ Error}&{Rate}&{$H^{2}\times H^{2}$ Error}&{Rate}\\
\hline
1.25E-01&3.6446E-02&&1.0898E-02&\\
6.25E-02&9.2402E-03&1.98&1.4150E-03&2.95\\
3.125E-02&2.3183E-03&1.99&1.7802E-04&2.99\\
    \hline
\end{tabular}
\end{center}
\caption{Numerical approximation of (\ref{423}) with $\eta_{0}=1$ on $[-32,32]$ with homogeneous Dirichlet boundary conditions: $H^{2}\times H^{2}$ 
norms of the error at $T=2$ and rates of convergence with Legendre Galerkin 
method where $N=512$.}
\label{adtav3}
\end{table}

We observe, on the other hand, that
the proof of Theorem \ref{theorem32} suggests that the error estimates (\ref{l315a}), (\ref{l315b}) also hold for other cases of the Boussinesq systems (\ref{BS1}), (\ref{BS1p}) out of the Bona-Smith family; in particular, those with $b\neq d$. (This can also be applied to the theoretical result of well-posedness given by Theorem \ref{theorem25}, cf. \cite{ADM2}.) In order to check this point, we will consider an example of (\ref{BS1}), (\ref{BS1p}) with $\mu=\nu=0$, so that
\begin{eqnarray*}
a=c=0,\quad b=\frac{1}{2}(\theta^{2}-1/3),\quad d=\frac{1}{2}(1-\theta^{2}),
\end{eqnarray*}
and $\theta^{2}=7/9$. The resulting system admits solitary wave solutions of the form, \cite{MChen1998a}
\begin{eqnarray}
\eta_{s}(x,t)=\eta_{0}{\rm sech}^{2}(\lambda (x-c_{s}t-x_{0})),\quad u_{s}(x,t)=u_{0}{\rm sech}^{2}(\lambda (x-c_{s}t-x_{0})),\label{423}
\end{eqnarray}
with $\eta_{0}>-3$ such that $3/(\eta_{0}+3)\notin [1,2]$ and
\begin{eqnarray*}
u_{0}=\eta_{0}\sqrt{\frac{3}{3+\eta_{0}}},\quad c_{s}=\frac{3+2\eta_{0}}{\sqrt{3(3+\eta_{0})}},\quad
\lambda=\frac{1}{2}\sqrt{\frac{2\eta_{0}}{b(3+2\eta_{0})}}.
\end{eqnarray*}
The $H^{2}\times H^{2}$ norm of the error when approximating (\ref{423}) for $\eta_{0}=1$ with the fully discrete methods is shown in Table \ref{adtav3}. The results suggest that the spectral convergence in space of the approximations also holds for this case, since larger values of the number $N$ of discretization points did not changed the rates of convergence.

We complete this first group of experiments with the numerical approximation of the BBM-BBM system on some interval $[a,b], a<0, b>0$, boundary conditions
\begin{eqnarray}
\eta(a,t)=\eta_{0},\quad \eta(b,t)=0,\quad u(a,t)=u_{0},\quad u(b,t)=0,\quad t>0,\label{l424}
\end{eqnarray}
where  $\eta_{0}>0$ and $u_{0}=\frac{\eta_{0}}{\eta_{0}+1}\sqrt{\frac{1}{2}(2+3\eta_{0}+\eta_{0}^{2}}$, and initial conditions
\begin{eqnarray}
\eta_{0}(x,0)=\frac{\eta_{0}}{2}(1-{\rm tanh}(\kappa x)),\quad
u(x,0)=\frac{u_{0}}{2}(1-{\rm tanh}(\kappa x)),\label{l425}
\end{eqnarray}
for some parameter $\kappa$. The initial data evolves to the generation of an undular bore of some type, \cite{Kalisch,Favre,Kalisch2,Peregrine}. The approximate undular bore is illustrated in Figure \ref{figL1}. The time integrator is the third-order scheme (\ref{sdirk}) with time stepsize $k=0.1h, h=6.25E-03$.

\begin{figure}[htbp]
\centering
\subfigure[]
{\includegraphics[width=0.47\textwidth]{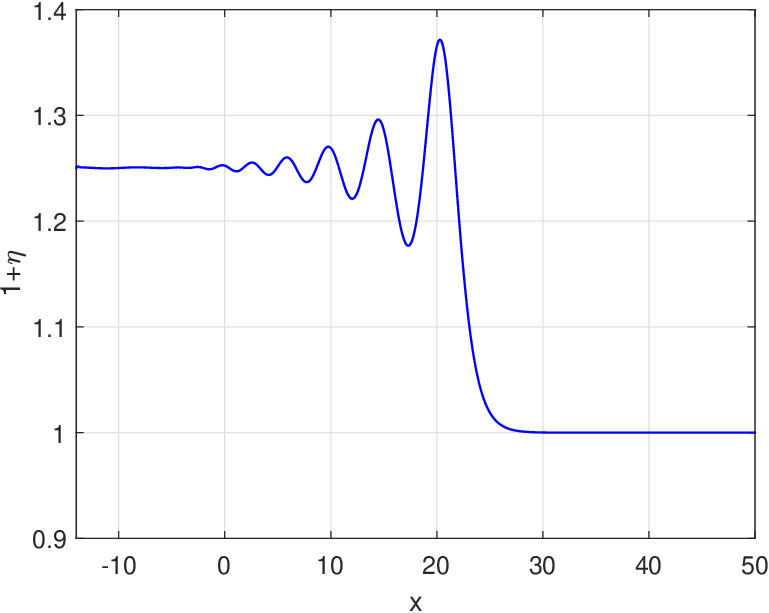}}
\subfigure[]
{\includegraphics[width=0.47\textwidth]{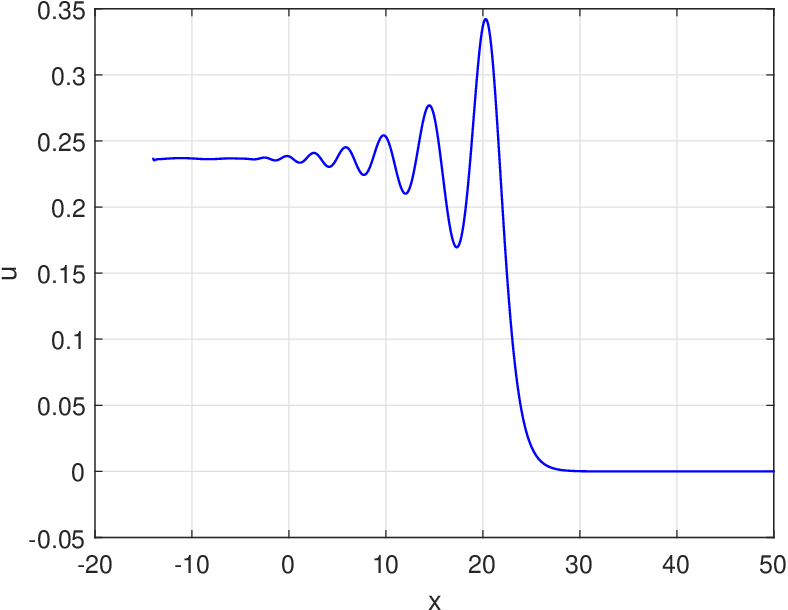}}
\caption{Numerical solution at $t=20$ of the BBM-BBM system on $[-14,50]$, from (\ref{l425}), (\ref{l424}), with $\eta_{0}=0.25,\kappa=0.7$. (a) $1+\eta$ component; (b) $u$ component.}
\label{figL1}
\end{figure}

\begin{table}[htbp]
\begin{center}
\begin{tabular}{|c|c|c|}
\hline
{$N$} &{$E_{N}$ ($L^{2}\times L^{2}$ Error)}&{$\log(E_{N})$}\\
\hline
64&6.1602E-01&-4.8448E-01\\
128&6.1445E-01&-4.8703E-01\\
256&6.1440E-01&-4.8712E-01\\
    \hline
\end{tabular}
\end{center}
\caption{Numerical solution at $t=20$ of the BBM-BBM system on $[-14,50]$, from (\ref{l425}), (\ref{l424}), with $\eta_{0}=0.25,\kappa=0.7$. Ratios (\ref{l423}) and corresponding logarithms from $N=64$.}
\label{adtav4}
\end{table}
Since the initial data (\ref{l425}) is smooth, an exponential order of convergence in space is expected. Table \ref{adtav4} shows, for $T=20$, the quotients (\ref{l423}) (using the $L^{2}\times L^{2}$ norm) and their logarithms from $N=64$ obtained with the third-order scheme (\ref{sdirk}). The results suggest that the errors decay as $e^{-h/2}$, where $h$ is the stepsize in space.

A second group of experiments illustrates the performance of the schemes with nonregular data. Recall that the error estimates  (\ref{l315a}), (\ref{l315b}) depend on the regularity of the solution of (\ref{BS2}) to be approximated. In this sense, for the nonsmooth case, some reduction of order of convergence and the generation of spurious oscillations in the numerical approximation are expected. All the experiments below are performed to approximate (\ref{BS2}) in $\Omega=[-1,1]$ with homogeneous boundary conditions, usng the third-order scheme (\ref{sdirk}) for $\gamma=(3+\sqrt{3})/6$ and $k=0.1h$.

We first study the cases $\theta^{2}=2/3$ (BBM-BBM system) and $\theta^{2}=9/11$, with initial conditions
\begin{eqnarray}
\eta_{0}(x)=\left\{\begin{matrix}1+2x+x^{2}&-1\leq x\leq 0\\1+2x-3x^{2}&0\leq x\leq 1\end{matrix}\right.,
\quad
u(x,0)=\eta_{0}(x).\label{l426}
\end{eqnarray}
Note that the second derivative of $\eta_{0}$ has a discontinuity at $x=0$ and the presence of the term $c\eta_{xxx}$ should affect the regularity of the solution. The corresponding approximate solutions at $T=1$ are shown in Figures \ref{figL2} and \ref{figL3}, respectively.

\begin{figure}[htbp]
\centering
\subfigure[]
{\includegraphics[width=0.47\textwidth]{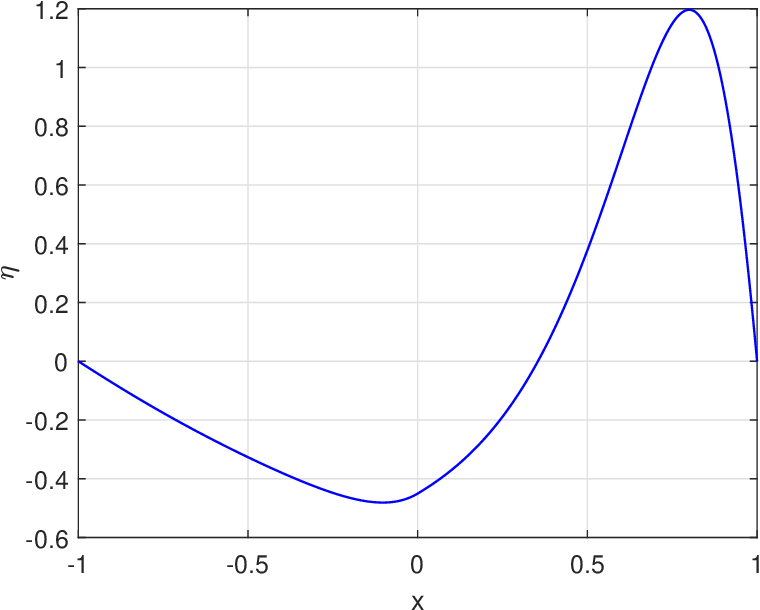}}$\;\;$
\subfigure[]
{\includegraphics[width=0.47\textwidth]{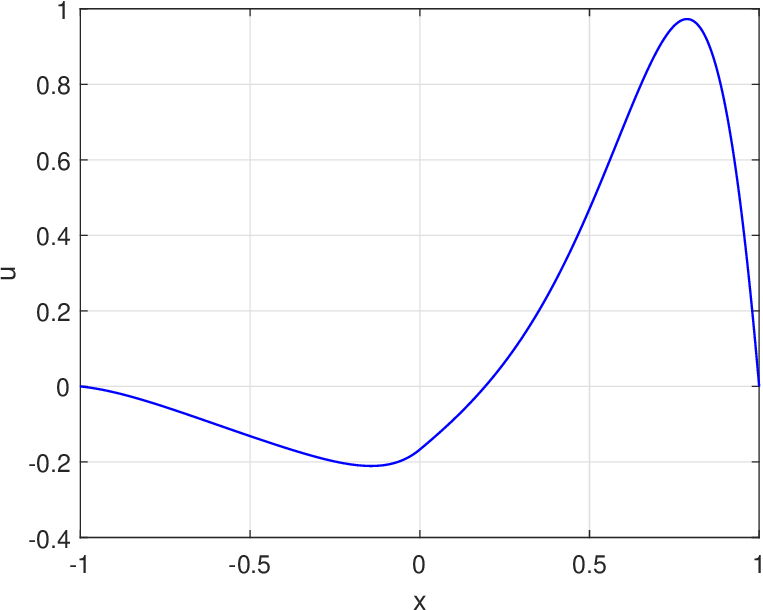}}
\caption{Numerical solution at $t=1$ of the system (\ref{BS2}) with $\theta=2/3$, zero boundary conditions and initial data (\ref{l426}) with $N=1024$.}
\label{figL2}
\end{figure}

\begin{figure}[htbp]
\centering
\subfigure[]
{\includegraphics[width=0.47\textwidth]{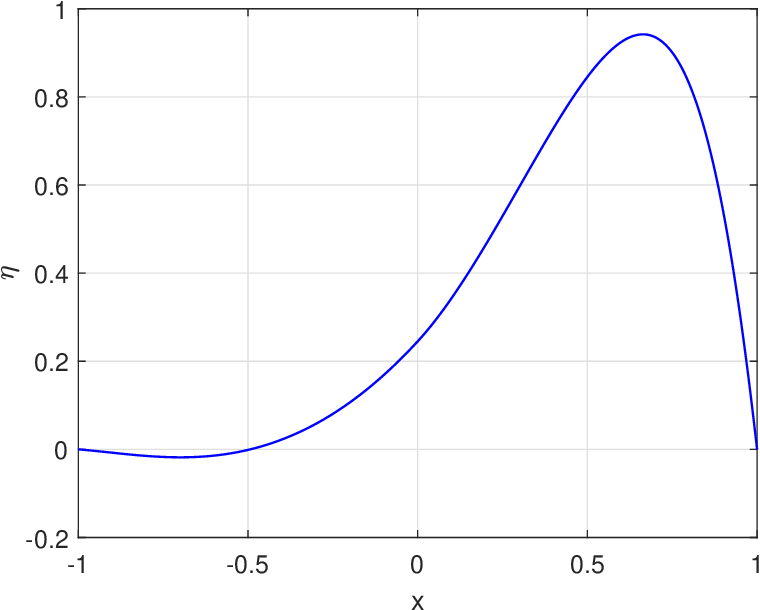}}$\;\;$
\subfigure[]
{\includegraphics[width=0.47\textwidth]{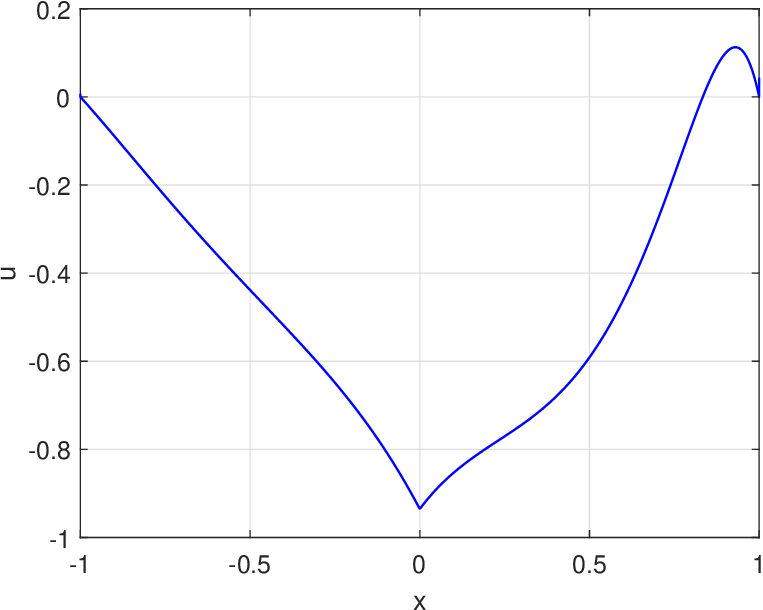}}
\caption{Numerical solution at $T=1$ of the system (\ref{BS2}) with $\theta=9/11$, zero boundary conditions and initial data (\ref{l426}) with $N=1024$.}
\label{figL3}
\end{figure}

\begin{table}[htbp]
\begin{center}
\begin{tabular}{|c|c|c|c|c|}
    \hline
&    \multicolumn{2}{c|} {$\theta^{2}=2/3$ ($H^{1}\times H^{1}$ norm)}& \multicolumn{2}{c|}{$\theta^{2}=9/11$ ($H^{1}\times L^{2}$ norm)}\\
\hline
{$N$} &{$E_{N}$}&{$\log(E_{N})/\log(2)$}&{$E_{N}$}&{$\log(E_{N})/\log(2)$}\\
\hline
16&2.6364&1.3985&2.8248&1.4981\\
32&2.7760&1.4730&2.8283&1.4999\\
64&2.8077&1.4894&2.8222&1.4968\\
128&2.8178&1.4946&2.8227&1.4971\\
    \hline
\end{tabular}
\end{center}
\caption{Spatial order of convergence at $T=1$ for the approximation of the system (\ref{BS2}) with $\theta^{2}=2/3,9/11$, zero boundary conditions and initial data (\ref{l426}).}
\label{adtav5}
\end{table}

We compute (\ref{l423}) at final time $T=1$, using the $H^{1}\times H^{1}$ norm when $\theta^{2}=2/3$ and the $H^{1}\times L^{2}$ norm  when $\theta^{2}=9/11$. The results, shown in Table \ref{adtav5}, suggest in both cases some reduction of the order of spatial convergence (recall that the example is out of the hypotheses of Theorem \ref{theorem32}) and a spatial error of $O(N^{-3/2})$.

In the last experiments we approximate the BBM-BBM system with initial conditions
\begin{eqnarray}
\eta_{0}(x)=1-|x|,\quad u_{0}(x)=0,\quad x\in\Omega.\label{l427}
\end{eqnarray}
The numerical solution at $T=1$ is shown in Figure \ref{figL4}. Note that now the derivative of $\eta-{0}$ has a discontinuity at $x=0$, but the evolution seems to regularize the $\eta-$profile, at least to allow calculating the $H^{1}$norm of the approximation.

\begin{figure}[htbp]
\centering
\subfigure[]
{\includegraphics[width=0.47\textwidth]{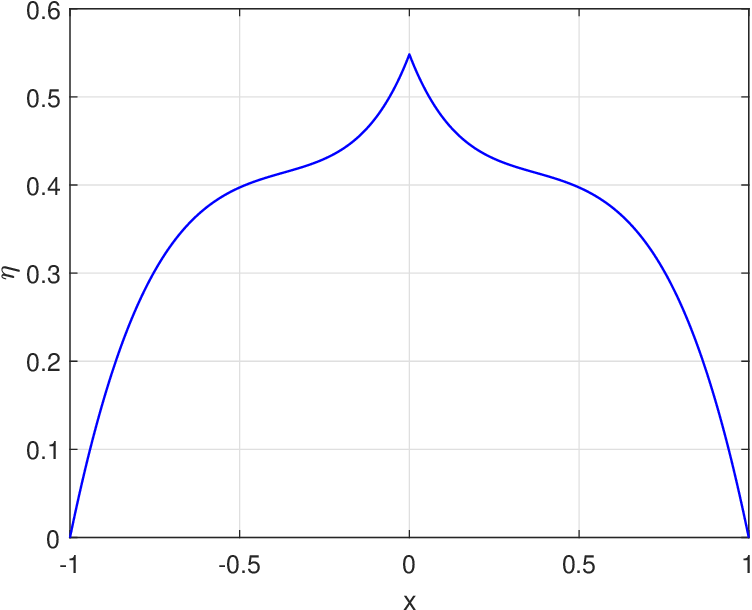}}$\;\;$
\subfigure[]
{\includegraphics[width=0.47\textwidth]{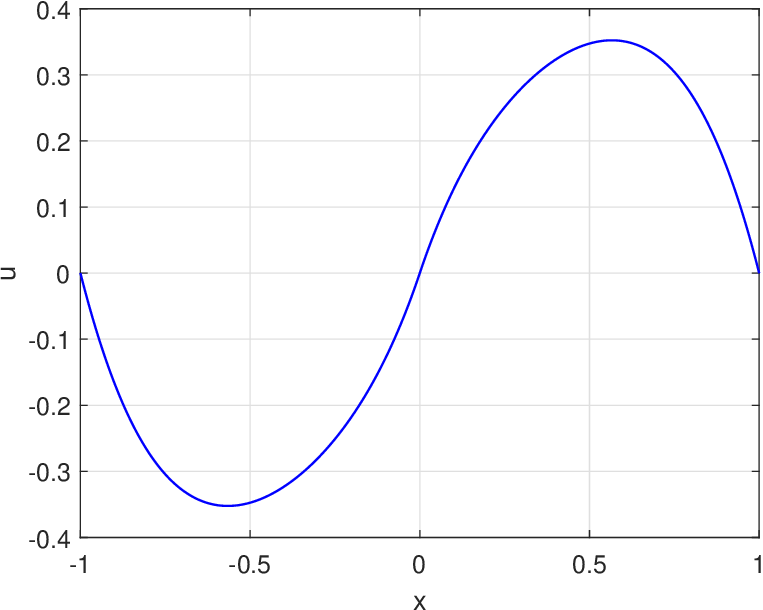}}
\caption{Numerical solution at $T=1$ of the system (\ref{BS2}) with $\theta=2/3$, zero boundary conditions and initial data (\ref{l427}) with $N=1024$.}
\label{figL4}
\end{figure}
Therefore, the values of (\ref{l423}) can be computed in the $L^{2}\times L^{2}$ and $H^{1}\times H^{1}$ norms. The results are shown in Table \ref{adtav6}, and suggest an error in space of $O(N^{-3/2})$ in the first case and of $O(N^{-1/2})$ in the second.

\begin{table}[htbp]
\begin{center}
\begin{tabular}{|c|c|c|c|c|}
    \hline
&    \multicolumn{2}{c|} {$L^{2}\times L^{2}$ norm}& \multicolumn{2}{c|}{$H^{1}\times H^{1}$ norm}\\
\hline
{$N$} &{$E_{N}$}&{$\log(E_{N})/\log(2)$}&{$E_{N}$}&{$\log(E_{N})/\log(2)$}\\
\hline
16&2.6095&1.3838&1.3861&4.7098E-01\\
32&2.7059&1.4361&1.4006&4.8606E-01\\
64&2.7656&1.4676&1.4087&4.9432E-01\\
128&2.7969&1.4838&1.4119&4.9767E-01\\
256&2.8128&1.4920&1.4132&4.9897E-01\\
    \hline
\end{tabular}
\end{center}
\caption{Spatial order of convergence at $T=1$ for the approximation of the system (\ref{BS2}) with $\theta^{2}=2/3$, zero boundary conditions and initial data (\ref{l427}).}
\label{adtav6}
\end{table}
\section*{Acknowledgements}
The author is supported by the 
Spanish Agencia Estatal de Investigaci\'on under Research Grant 
PID2020-113554GB-I00/AEI/10.13039/501100011033 and by the Junta de Castilla y Le\'on and FEDER funds (EU) 
under Research Grant VA193P20.

\end{document}